\numberwithin{equation}{section}
\newcommand{\eps}{\varepsilon}
\newcommand{\Z}{{\mathbb Z}}
\newcommand{\N}{{\mathbb N}}
\newcommand{\G}{{\mathcal G}}
\newcommand{\T}{{\mathcal T}}
\newcommand{\R}{{\mathbb R}}
\newcommand{\Q}{{\mathbb Q}}
\renewcommand{\phi}{\varphi}
\newcommand{\HH}{{\mathcal H}}
\renewcommand{\S}{\mathcal{S}}
\newcommand{\IE}{\mathbb{E}}
\newcommand{\IP}{\mathbb{P}}
\newcommand{\argmin}{\mathop{\text{arg$\,$min}}}
\newcommand{\F}{\mathcal{F}}
\newcommand{\1}[1]{{\mathbf{1}}{\{#1\}}}
\newtheorem{theo}{Theorem}[section]
\newtheorem{lem}[theo]{Lemma}
\newtheorem{rmk}[theo]{Remark}
\title{On large deviations for the cover time of two-dimensional torus}
\author{Francis Comets$^{1}$ \and Christophe Gallesco$^{2}$ \and
 Serguei~Popov$^{2}$ \and Marina Vachkovskaia$^{2}$}
\begin{document}

\maketitle

{\footnotesize 
\noindent $^{~1}$Université Paris Diderot -- Paris 7, 
Mathématiques, 
 case 7012, F--75205 Paris
Cedex 13, France
\\
\noindent e-mail:
\texttt{comets@math.univ-paris-diderot.fr}

\noindent $^{~2}$Department of Statistics, Institute of Mathematics,
 Statistics and Scientific Computation, University of Campinas --
UNICAMP, rua S\'ergio Buarque de Holanda 651,
13083--859, Campinas SP, Brazil\\
\noindent e-mails: \texttt{\{gallesco,popov,marinav\}@ime.unicamp.br}

}

\begin{abstract}
 Let $\T_n$ be the cover time of two-dimensional discrete torus $\Z^2_n=\Z^2/n\Z^2$.
We prove that $\IP[\T_n\leq \frac{4}{\pi}\gamma n^2\ln^2 n]=\exp(-n^{2(1-\sqrt{\gamma})+o(1)})$
for $\gamma\in (0,1)$.
One of the main methods used in the proofs is the decoupling 
of the walker's trace into independent excursions by means of
 soft local times.
\\[.3cm]\textbf{Keywords:} soft local time, hitting time, 
simple random walk
\\[.3cm]\textbf{AMS 2000 subject classifications:}
Primary 60G50, 82C41. Secondary 60G55.
\end{abstract}

\section{Introduction and results}
\label{s_introres}
Let $(X_t, t=1,2,3,\ldots)$ be a discrete-time simple random walk on the 
two-dimensional discrete torus $\Z^2_n=\Z^2/n\Z^2$. 
 Define the entrance time
to the site $x\in \Z^2_n$ by
\begin{equation}
\label{eq_defTx}
T_n(x) = \min\{t\geq 0: X_t=x\},
\end{equation}
and the \emph{cover time} of the torus by
\begin{equation}
\label{eq_defT}
\T_n = \max_{x\in \Z^2_n} T_n(x),
\end{equation}
that is, $\T_n$ is the first instant of time when all the sites
of the torus were already visited by the walk. 

The analysis of cover time by the planar random walk was suggested in~\cite{Wilf}
under the picturesque name of ``white screen problem'', and was soon after popularized 
 in the probabilistic community \cite[Chapter~7]{AldousFill}. 
We refer to~\cite{DemboSF} for a substantial survey on cover
 times, and to~\cite{ShiB} for a short account with a focus on exceptional points.
Besides being an appealing fundamental question, the study of cover time 
is of primer interest for performance evaluation of broadcast procedures in random networks,
see e.g.~\cite{KahnKLV}.

Not only natural, 
the two-dimensional model is also more difficult
than its higher-dimensional counterparts. 
This is because dimension two is critical for the walk,
resulting in strong correlations. 
To illustrate the dimension-based comparison, 
observe that 
very fine results are available for $d\geq 3$, see e.g.~\cite{B13} and references
therein, and also~\cite{GH12} where a closely related continuous problem
was studied. In contrast, in two dimensions the first-order asymptotics
of the cover time was completed only recently, 
after a series of intermediate steps over a decade of efforts.
 In~\cite{DPRZ04} it was proved that 
\begin{equation}
\label{conv_Tn}
 \frac{\T_n}{n^2\ln^2 n} \to \frac{4}{\pi} 
\text{ in probability, as $n\to\infty$}.
\end{equation}
More rough results, without the precise constant,
can be obtained using the Matthews' method~\cite{M88}.
The result~\eqref{conv_Tn} was then refined in~\cite{D12};
in the same paper it was suggested that $\sqrt{\T_n/2n^2}$ should
be around~$\sqrt{2/\pi}\ln n - c\ln\ln n$ for a \emph{positive}
constant~$c$
 (observe that~\eqref{conv_Tn}
means that $\sqrt{\T_n/2n^2}=\big(\sqrt{2/\pi}+o(1)\big)\ln n$).
This can be seen as a step towards the conjecture of~\cite{BZ09} 
 that $\sqrt{\T_n/n^2}$
should be tight around its median and nondegenerate. 
Such fine properties should be related to the fine structure of 
\emph{late points} of the walk, i.e., the sites 
that get covered only ``shortly'' before~$\T_n$. 
In spite of a very significant progress on this question
 achieved in~\cite{DPRZ06}, much remains to be 
discovered.

Now, we formulate our result on the deviations from below 
for the cover time: 
\begin{theo}
\label{t_lower}
 Assume that $\gamma\in(0,1)$. Then, for all $\eps>0$ we have
\begin{equation}
\label{eq_t_lower}
\exp\big(-n^{2(1-\sqrt{\gamma})+\eps}\big)\leq 
\IP\Big[\T_n\leq \frac{4}{\pi}\gamma n^2\ln^2 n\Big]
\leq \exp\big(-n^{2(1-\sqrt{\gamma})-\eps}\big)
\end{equation}
for all large enough~$n$.
\end{theo}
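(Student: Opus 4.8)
The plan is to prove the lower and upper bounds for $\IP[\T_n\leq \frac{4}{\pi}\gamma n^2\ln^2 n]$ separately, with both relying on reducing the cover time to the behaviour of the walk's excursions between concentric circles around a fixed point, decoupled via soft local times. For the lower bound (the ``easy'' direction of the probability estimate, i.e.\ producing an event of probability at least $\exp(-n^{2(1-\sqrt\gamma)+\eps})$), I would cover the torus efficiently by forcing the walk to do roughly $\gamma$ times the ``typical'' number of excursions near \emph{every} point. A cleaner route: tile the torus into $\approx n^{2(1-\sqrt\gamma)}$ boxes of side $\approx n^{\sqrt\gamma}$, and on each box demand that the walk covers it in the $O(n^{2(\sqrt\gamma)}\ln^2 n)$ time that an isolated small torus would need; if the walk performs these ``good'' covering patterns box by box, the total time stays below $\frac{4}{\pi}\gamma n^2\ln^2 n$ up to lower-order corrections, and the cost is the product of $\approx n^{2(1-\sqrt\gamma)}$ events each of probability bounded below by a constant (or by $\exp(-o(n^\eps))$), giving $\exp(-n^{2(1-\sqrt\gamma)+o(1)})$. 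Making the time bookkeeping match the constant $\frac{4}{\pi}\gamma$ is the delicate part here, and this is exactly where the excursion decomposition with soft local times is used — to glue the per-box estimates together with genuinely independent randomness.

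For the upper bound, the strategy is to show that covering the torus ``too fast'' forces \emph{many} well-separated points to be covered fast, and this has a cost that multiplies. Fix a maximal $\delta$-net of points $x_1,\dots,x_K$ on the torus with mutual distances of order $n^{\beta}$ for a suitable $\beta<1$, so $K\approx n^{2(1-\beta)}$. If $\T_n\leq \frac{4}{\pi}\gamma n^2\ln^2 n$ then \emph{every} $x_i$ is hit by this time. Around each $x_i$ consider the annuli of radii $n^{\beta}$ and (say) $n^{\beta}/2$ or a sequence of dyadic radii down to $O(1)$; by the standard first-moment / late-point analysis (as in~\cite{DPRZ04,DPRZ06}), the probability that a \emph{single} fixed point fails to be late, i.e.\ that it \emph{is} covered within time $\frac{4}{\pi}\gamma n^2\ln^2 n$, is $n^{-(1-\gamma)+o(1)}$, because being covered this early means the number of excursions of the walk across these annuli near $x_i$ is only a $\gamma$-fraction of the typical $\frac{2}{\pi}\ln^2 n / \ln 2$-type count, and the relevant excursion-count large deviation gives an exponent $(1-\sqrt\gamma)^2\cdot$(something) — one has to be careful: actually being covered by time $\gamma T_{\mathrm{cov}}$ has probability $n^{-2(1-\sqrt\gamma)+o(1)}$ for one point only if one point suffices, but here the point is that these $K$ events, once the net is chosen far enough apart and the excursions decoupled, are \emph{almost independent}, so the joint probability that all $K$ of them hold is at most $\big(n^{-(1-\sqrt\gamma)^2\alpha + o(1)}\big)^{K}$ for an appropriate normalization — and choosing $\beta$ so that $K\cdot(\text{single-point exponent}) = 2(1-\sqrt\gamma)$ yields the claimed bound. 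The precise matching of the single-point exponent with $K$ is what pins down $\beta$, and I would expect $\beta$ to be chosen so that $1-\beta$ equals half the single-point cost, making $2(1-\beta)\times(\text{cost}) = 2(1-\sqrt\gamma)$.

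The main obstacle, in both directions, is the \textbf{decoupling}: the events ``$x_i$ covered fast'' (upper bound) or ``box $i$ covered in the allotted time'' (lower bound) are defined in terms of overlapping portions of the same trajectory, and a priori they are strongly positively correlated in two dimensions because of the logarithmic Green's function. The resolution is to use soft local times (as advertised in the abstract) to realize the walk's successive visits to the small neighbourhoods of $x_1,\dots,x_K$ as independent excursion sequences up to a small error: concretely, one couples the true excursion process near each $x_i$ with an i.i.d.\ idealized excursion process, controlling the coupling error by a term that is $o(1)$ on the exponential scale provided the points are $n^\beta$-separated with $\beta$ bounded away from $0$ and $1$. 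Once this coupling is in place, the upper bound follows from a union-free product estimate over the net and the lower bound from an independence-based product of ``good covering'' events, and the $\eps$'s in~\eqref{eq_t_lower} absorb all the polylogarithmic and coupling-error corrections. I would also need the elementary large-deviation estimate for a sum of i.i.d.\ geometric-type excursion counts — namely that the probability that this sum is a $\gamma$-fraction of its mean $m\sim c\ln n$ is $\exp(-(1-\sqrt\gamma)^2 m(1+o(1)))$ via the Cramér transform of the relevant distribution, which is where the exponent $(1-\sqrt\gamma)$ (rather than $1-\gamma$) comes from after taking the square root implicit in the number-of-excursions-to-time conversion.
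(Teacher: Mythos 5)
Your lower-bound outline is roughly in the right direction (tile the torus into $n^{2(1-\alpha)}$ boxes of side $n^\alpha$ with $\alpha$ slightly below $\sqrt\gamma$, cover each box in the allotted time, multiply), but two things are wrong. First, soft local times are \emph{not} what makes the lower bound work, contrary to what you write. The paper achieves the per-box event by an explicit \emph{forcing} strategy: whenever the walk leaves the enlarged box $B'_i$ it is forced to return quickly (within $O(n^{2\alpha}/\ln^4 n)$ steps) to a point of $\partial B_i$ chosen according to the entrance law of a small torus $\Z^2_{\ell_n}$, so that the trace inside $B_i$ stochastically dominates the trace of a free walk on that small torus. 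Each forced return costs a polynomially small probability (local CLT), and there are $O(\ln^6 n)$ of them per box, giving $\exp(-c\ln^{10} n)$ per box rather than ``probability bounded below by a constant.'' Second, your worry that ``the time bookkeeping is delicate'' is resolved simply by taking $\alpha$ strictly below $\sqrt\gamma$, which gives a time budget per box of $\frac{4\gamma}{\pi}n^{2\alpha}\ln^2 n$ strictly exceeding the cover time $\sim\frac{4}{\pi}\alpha^2 n^{2\alpha}\ln^2 n$ of a box-sized torus; the quantitative form of~\eqref{conv_Tn} then gives a probability bounded away from $0$ that the small torus is covered.

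Your upper-bound plan has a genuine conceptual gap. You anchor the argument on a net of \emph{single points} $x_1,\ldots,x_K$ and claim ``the probability that a single fixed point is covered within time $\frac{4}{\pi}\gamma n^2\ln^2 n$ is $n^{-(1-\gamma)+o(1)}$.'' That is false: a fixed point has hitting time of order $\frac{2}{\pi}n^2\ln n$, so by the much larger time $\frac{4}{\pi}\gamma n^2\ln^2 n$ it has been hit with probability $1-n^{-2\gamma+o(1)}$, i.e.\ overwhelmingly. No polynomial cost can be extracted from single points; multiplying near-$1$ probabilities over $K$ points gives a bound close to $1$, not a stretched-exponential one. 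The object that works (and that the paper uses) is a \emph{box of polynomial size} $n^\alpha$ with $\alpha>\sqrt\gamma$: covering such a box requires roughly $\frac{4}{\pi}\alpha^2 n^{2\alpha}\ln^2 n$ steps inside it, whereas by a total-excursion count (Lemma~\ref{l_total_excurs}) the time budget per box is only $\sim\frac{4}{\pi}\gamma n^{2\alpha}\ln^2 n < \frac{4}{\pi}\alpha^2 n^{2\alpha}\ln^2 n$. Consequently, after the soft-local-time coupling replaces the true excursions by i.i.d.\ ones (Lemmas~\ref{l_seau}--\ref{l_flat_entrance}), each box is left incompletely covered with probability close to $1$, and a Bernoulli large-deviations estimate over the $n^{2(1-\alpha)}$ boxes gives $\exp(-cn^{2(1-\alpha)})$. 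Related to this error, your derivation of the $\sqrt\gamma$ exponent via a Cram\'er transform of excursion counts is not how the exponent arises: it comes purely from matching the cover time $\frac{4}{\pi}L^2\ln^2 L$ of a box of side $L=n^\alpha$ against the per-box time budget $\frac{4}{\pi}\gamma n^{2\alpha}\ln^2 n$, which is why the threshold is at $\alpha^2=\gamma$.
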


It should be mentioned that in~\cite{BGM} it was proved
that it is exponentially unlikely to cover \emph{any} 
bounded degree graph in \emph{linear} (with respect
to the number of vertices) 
number of steps. In this paper, however, we are concerned with times 
which differ from the cover time only by a constant factor, and so we obtain only stretched exponential decay.

\begin{rmk}
\label{rem_boxes}
 In fact, in Section~\ref{s_upbound} we prove a bit more than 
the upper bound in~\eqref{eq_t_lower}. Namely, 
assume that $\gamma\in (0,1)$, fix an arbitrary
$\alpha \in (\sqrt{\gamma}, 1)$ and tile the torus~$\Z^2_n$
with boxes of size~$n^\alpha$. Then there exist $c=c(\alpha,\gamma)>0$,
$c'=c'(\alpha,\gamma)>0$,
such that, at the moment $\frac{4}{\pi}\gamma n^2\ln^2 n$,
there are at least~$cn^{2(1-\alpha)}$ boxes which are not completely
covered, with probability at least~$1-\exp(-c'n^{2(1-\alpha)})$.
\end{rmk}

For completeness, we also include the result on the deviations from 
the other side:
\begin{theo}
\label{t_upper}
 Assume that $\gamma > 1$. Then, for all $\eps>0$ we have
\begin{equation}
 \label{eq_t_upper}
n^{-2(\gamma-1)-\eps} 
\leq \IP\Big[\T_n\geq \frac{4}{\pi}\gamma n^2\ln^2 n\Big] 
\leq n^{-2(\gamma-1)+\eps}
\end{equation}
for all large enough~$n$.
\end{theo}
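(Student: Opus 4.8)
The plan is to reduce the event $\{\T_n \geq \frac{4}{\pi}\gamma n^2\ln^2 n\}$ to the event that a \emph{single} well-chosen site is still uncovered at time $t_n := \frac{4}{\pi}\gamma n^2\ln^2 n$. For the lower bound $\IP[\T_n\geq t_n]\geq n^{-2(\gamma-1)-\eps}$, fix any site $x$ and estimate $\IP[T_n(x) > t_n]$ from below; the known first- and second-moment technology for the number of late points (as developed in \cite{DPRZ04,DPRZ06}) shows that an individual site survives uncovered up to time $\frac{4}{\pi}\beta n^2\ln^2 n$ with probability $n^{-2\beta+o(1)}$, and taking $\beta$ slightly above $1$ and then $\beta\downarrow 1$ accommodates the $\eps$. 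Actually the cleanest route is to invoke the decoupling into independent excursions via soft local times — the method the paper advertises — to compare the walk's visits to a fixed small disk around $x$ with an i.i.d.\ sequence of excursions, for which the non-hitting probability is a product and hence easily bounded below.

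For the upper bound $\IP[\T_n\geq t_n]\leq n^{-2(\gamma-1)+\eps}$, the plan is a union bound over sites combined with decorrelation. Naively, $\IP[\T_n \geq t_n]\leq \sum_{x}\IP[T_n(x) > t_n]\leq n^2\cdot n^{-2\gamma+o(1)} = n^{-2(\gamma-1)+o(1)}$, which is already the claimed bound. So the real content is showing $\IP[T_n(x)>t_n]\leq n^{-2\gamma + o(1)}$ uniformly in $x$, i.e.\ the correct single-site upper estimate on the non-covering probability. Here I would again use soft local times to decouple the successive excursions of the walk between a small circle around $x$ of radius, say, $n^{\delta}$ and a larger concentric circle of radius $n^{\delta'}$ (with $\delta<\delta'<1$): up to time $t_n$ the walk makes $\approx \frac{2\gamma}{(1-\delta)}\ln n \cdot \frac{\ln n}{\ln n}$-order many such excursions (the precise count comes from the standard computation that the expected number of excursions in time $n^2\ln^2 n$ is of order $\ln^2 n / \ln n$ up to constants), each excursion hits $x$ with probability of order $(\ln(n^{\delta'}/n^\delta))^{-1}\asymp (\ln n)^{-1}$ by the standard two-dimensional harmonic-measure / potential-kernel estimate, and the soft-local-time coupling makes these excursions effectively independent. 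Multiplying the per-excursion miss probabilities and optimizing the radii gives the exponent $2\gamma$.

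The main obstacle will be making the excursion-counting precise enough that the \emph{constant} in front of $\ln n$ in the exponent comes out to exactly $2\gamma$ (not just a constant of the right order), and controlling the error terms introduced by the soft-local-time coupling so that they only contribute to the $o(1)$ in the exponent. Concretely, one must show that with overwhelming probability the number of completed excursions by time $t_n$ lies in $[(1-o(1))N_n,(1+o(1))N_n]$ for the appropriate $N_n$, and that the total coupling error over all these excursions is $n^{-\omega(1)}$ or at least negligible on the relevant scale; this requires quantitative hitting-time estimates for the simple random walk on $\Z^2_n$ (via the Green's function of the torus and the potential kernel $a(\cdot)$ of $\Z^2$) together with a careful choice of the intermediate radii so that the boundary-correction terms $\frac{\ln\ln n}{\ln n}$-type losses are absorbed. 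Once the single-site two-sided estimate $\IP[T_n(x)>t_n]=n^{-2\gamma+o(1)}$ is established uniformly in $x$, both halves of \eqref{eq_t_upper} follow immediately, the upper bound by the union bound above and the lower bound by monotonicity $\IP[\T_n\geq t_n]\geq \IP[T_n(x)>t_n]$ for a single fixed $x$.
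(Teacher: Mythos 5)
Your upper bound is essentially the paper's: a union bound over the $n^2$ sites combined with the single-site estimate $\IP[T_n(x)>t_n]\leq n^{-2\gamma+o(1)}$ (the paper simply quotes Lemma~3.3 of~\cite{DPRZ06} for that, rather than re-deriving it via soft local times, but that is a cosmetic difference). The real problem is in your lower bound.

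You propose $\IP[\T_n\geq t_n]\geq \IP[T_n(x)>t_n]$ for a \emph{single fixed} $x$, and claim this matches the target $n^{-2(\gamma-1)-\eps}$. It does not. By your own single-site estimate, $\IP[T_n(x)>t_n]=n^{-2\gamma+o(1)}$, and since $\gamma>1$ we have $2\gamma>2(\gamma-1)$, so this gives $\IP[\T_n\geq t_n]\geq n^{-2\gamma+o(1)}$, which is \emph{smaller by a factor of $n^{-2+o(1)}$} than the exponent you need to prove. Fixing the site in advance is the wrong strategy: it forces you to pay for avoiding that specific site for the \emph{entire} duration, when in fact what you want is to exploit that some site will still happen to be uncovered around the typical cover time, for free. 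The remark that ``taking $\beta$ slightly above $1$ and then $\beta\downarrow 1$ accommodates the $\eps$'' does not help either; $\beta$ is tied to the time $t_n$ (i.e.\ $\beta=\gamma$) and cannot be taken close to $1$.

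The paper's lower bound strategy is different and is the actual content of the proof: let the walk evolve freely up to time $\frac{4}{\pi}(1-\delta)n^2\ln^2 n$; by~\eqref{conv_Tn} there is still at least one uncovered site at that moment with probability at least $\tfrac12$ (this costs nothing polynomial). Then apply the single-site lower estimate~\eqref{eq_p_ht>} to force the walk to avoid \emph{that} remaining site for the residual time $\frac{4}{\pi}(\gamma-1+\delta)n^2\ln^2 n=\frac{2}{\pi}\cdot 2(\gamma-1+\delta)\,n^2\ln^2 n$, which costs $n^{-2(\gamma-1+\delta)-\eps}$. Letting $\delta\downarrow 0$ yields the claimed exponent $2(\gamma-1)$. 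So you are missing the two-phase ``wait, then avoid'' idea, and your current argument produces a genuinely wrong exponent for the lower bound.
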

However, it should be noted that the proof of Theorem~\ref{t_upper} 
is not difficult once one has~\eqref{conv_Tn}, 
although, to the best of our knowledge, 
it did not appear in the literature explicitly in this form.

To see how the proof of Theorem~\ref{t_upper} 
can be obtained, observe first that we 
have for all $\beta>0$, $\eps>0$, all  large enough $n$  and all $x\in\Z^2_n$, 
\begin{align}
 \max_{y\in \Z^2_n}\IP_y\Big[T_n(x)\geq \frac{2}{\pi}\beta n^2\ln^2 n\Big]
 &\leq n^{-\beta+\eps}, \label{eq_p_ht<}\\
 \min_{\substack{y\in \Z^2_n \\  y\neq x}}\IP_y\Big[T_n(x)\geq \frac{2}{\pi}\beta n^2\ln^2 n\Big]
 &\geq n^{-\beta-\eps}.
\label{eq_p_ht>}
\end{align}
The estimate~\eqref{eq_p_ht<} is Lemma~3.3 of~\cite{DPRZ06};
in fact, it is straightforward to modify the proof of the same lemma
to obtain~\eqref{eq_p_ht>}.

Now, the second inequality in~\eqref{eq_t_upper} immediately
follows from~\eqref{eq_p_ht<} and the union bound. As for the 
first inequality, the strategy for achieving this lower bound
can be described in the following way: let the random walk evolve
freely almost up to the expected cover time so that, with good probability
there are still uncovered sites, and then choose any particular
uncovered site and make the walk avoid it till the end.
More precisely,
observe that, by~\eqref{conv_Tn}, for any fixed $\delta>0$ it holds that
\[
 \IP\Big[\T_n\geq \frac{4}{\pi}(1-\delta) n^2\ln^2 n\Big] \geq \frac{1}{2}
\]
for all~$n$ large enough; that is, at time $\frac{4}{\pi}(1-\delta) n^2\ln^2 n$
there is at least one uncovered site with probability at least $\frac{1}{2}$.
An application of~\eqref{eq_p_ht>} with $\beta=2(\gamma-1+\delta)$
concludes the proof of Theorem~\ref{t_upper}.

One can informally interpret~\eqref{eq_p_ht<}--\eqref{eq_p_ht>}
 in the following
way: hitting time of a fixed state has approximately exponential distribution
with mean $\frac{2}{\pi} n^2\ln n$. First, the convergence in~(\ref{conv_Tn}) 
agrees with the intuitive understanding
 that ``hitting times of different
sites should be roughly independent'', since the maximum of~$n^2$ i.i.d.\ 
exponential random variables with mean $\frac{2}{\pi} n^2\ln n$ is concentrated
around $\frac{4}{\pi} n^2\ln^2 n$. Moreover, the probability for the maximum of such 
r.v.'s to be larger by a factor $\gamma>1$ than this value is $n^{-2(\gamma-1)+o(1)}$. 
It is interesting to observe that, while Theorem~\ref{t_upper} still
agrees with this intuition, Theorem~\ref{t_lower} does not. Indeed, the probability
that the maximum of~$n^2$ i.i.d.\ 
exponential random variables with mean $\frac{2}{\pi} n^2\ln n$ is 
at most $\frac{4}{\pi}\gamma n^2\ln^2 n$ 
(where $\gamma\in (0,1)$) is of order
$(1-n^{2\gamma})^{n^2}\simeq \exp (-n^{2(1-\gamma)})$, 
which is not the actual order of magnitude obtained in Theorem~\ref{t_lower}. 
Thus, the behavior of the lower tails of the cover time reveals the 
fine dependence between hitting times of the different points on the torus.

To prove the upper bound in~\eqref{eq_t_lower}, we use the method
of \emph{soft local times} initially developed in~\cite{SLT},
where it was used to obtain strong decoupling inequalities 
for the traces left by random interlacements on disjoint sets.
This approach allows to simulate an adapted process on a general space~$\Sigma$
using a realization of a Poisson point process on~$\Sigma\times \R_+$. 
Naturally, one can use \emph{the same} realization of the Poisson process
to simulate \emph{several} different processes on~$\Sigma$, thus giving rise 
to a coupling of these processes. We do this to compare the excursions of
the random walk at different regions with the independent excursions,
that is, in some sense, we \emph{decouple} the traces of the random walk 
in different places, which of course makes things simpler.

Let us comment also on the large deviations for the cover time of the torus
 in dimension~$d\geq 3$. This question
was studied in~\cite{GH12} in the continuous setting, 
i.e., for the Brownian motion. Among other results, 
in~\cite{GH12} the many-dimensional counterparts of Theorems~\ref{t_lower}
(only the upper bound, 
by $\exp (-n^{d(1-\gamma)+o(1)})$) and~\eqref{t_upper} were obtained. 
We expect no substantial difficulties
in obtaining the same results for the random walk using the same
methods as in the present paper, except for the lower bound 
for the deviation probability from below,
since the approach of Section~\ref{s_lowbound}
 fails in higher dimensions. 

Notational convention: in the case when the 
starting point of the random walk is fixed, we indicate that in the subscript;
otherwise, the initial distribution of the random walk is considered 
to be uniform. The positive constants 
(not depending on~$n$ but possibly depending on the quantities, 
such as~$\gamma$ in Theorem~\ref{t_lower},
which are considered to be fixed)
are denoted by $c,c',c_1,c_3,c_4$ etc.
Also, it is convenient to view the random walks on the torus, 
simultaneously for all torus sizes~$n$, 
as the random walk on the full lattice observed modulo~$n \Z^2$.

%
%

\section{Soft local times}
\label{s_SLT}
In this section we describe the method  
of soft local times~\cite{SLT}, which is the key to 
the upper bound in~\eqref{eq_t_lower}.

First, we define the entrance time to a set $A\subset\Z^2_n$ by
\[
 T_n(A) = \min_{x\in A} T_n(x).
\]

 We write $x\sim y$ if~$x$ and~$y$ are neighbors
in the graph~$\Z^2_n$.
For $A\subset\Z^2_n$ let us define the (inner)
boundary of~$A$ by $\partial A = \{x\in A:\text{ there exists }y\notin A 
\text{ such that } x\sim y \}$. 


Next, for $A\subset \Z^2_n$ we define the entrance law to~$A$:
for $x\notin A$ and $y\in \partial A$ let
\begin{equation}
\label{df_H_A}
H_A(x,y) = \IP_x[X_{T_n(A)}=y]. 
\end{equation}

Let us now describe the method of soft local times, which allows 
us to compare excursions of the random walk with \emph{independent}
excursions. Let $A_1,\ldots,A_{k_0},A'_1,\ldots,A'_{k_0}\subset \Z^2_n$
be such that $A_j\subset A'_j$, $A_j\cap \partial A'_j=\emptyset$
for $j=1,\ldots,k_0$, and $A'_i\cap A'_j=\emptyset$ for $i\neq j$.
Let $A=\bigcup_{j=1}^{k_0}A_j$ and $A'=\bigcup_{j=1}^{k_0}A'_j$;
and assume that $\partial A'=\bigcup_{j=1}^{k_0}\partial A'_j$,
which implies also that
$\partial A=\bigcup_{j=1}^{k_0}\partial A_j$.

Now, suppose that we are only interested in the trace left by the random
walk on the set~$A$. Then, 
(apart from the initial piece of the trajectory until hitting~$\partial A'$
for the first time)
it is enough to know what are
the excursions of the random walk between the boundaries of~$A$ 
and~$A'$. To define these excursions, consider 
the following sequence of stopping times: 
\begin{align*}
 D_0 &= T_n(\partial A'),\\
 S_1 &= \min\{t> D_0 : X_t \in \partial A\},\\
 D_1 &= \min\{t> S_1 : X_t \in \partial A'\},
\end{align*}
and
\begin{align*}
 S_k &= \min\{t> D_{k-1} : X_t \in \partial A\},\\
 D_k &= \min\{t> S_k : X_t \in \partial A'\},
\end{align*}
for $k\geq 2$. 

We denote by $\Sigma_j$
the space of excursions between~$\partial A_j$ 
and~$\partial A'_j$; i.e., an element~$Z$ of this space is
a finite nearest-neighbor trajectory beginning
at a site of~$\partial A_j$ and ending on its first visit 
to~$\partial A'_j$. Denote also $\Sigma=\bigcup_{j=1}^{k_0}\Sigma_j$. 
The method of soft local times, 
as presented in~\cite{SLT}, provides a way of constructing 
the excursions between~$\partial A$ 
and~$\partial A'$ of the walk~$X$
using a Poisson point process
on $\Sigma \times \R_+$. 
To keep the presentation more clear and visual, 
we use another (in this case, equivalent) 
way of describing this approach, through
a \emph{marked} Poisson process 
 on~$\partial A\times \R_+$.

\begin{figure}
\centering \includegraphics[width=\textwidth]{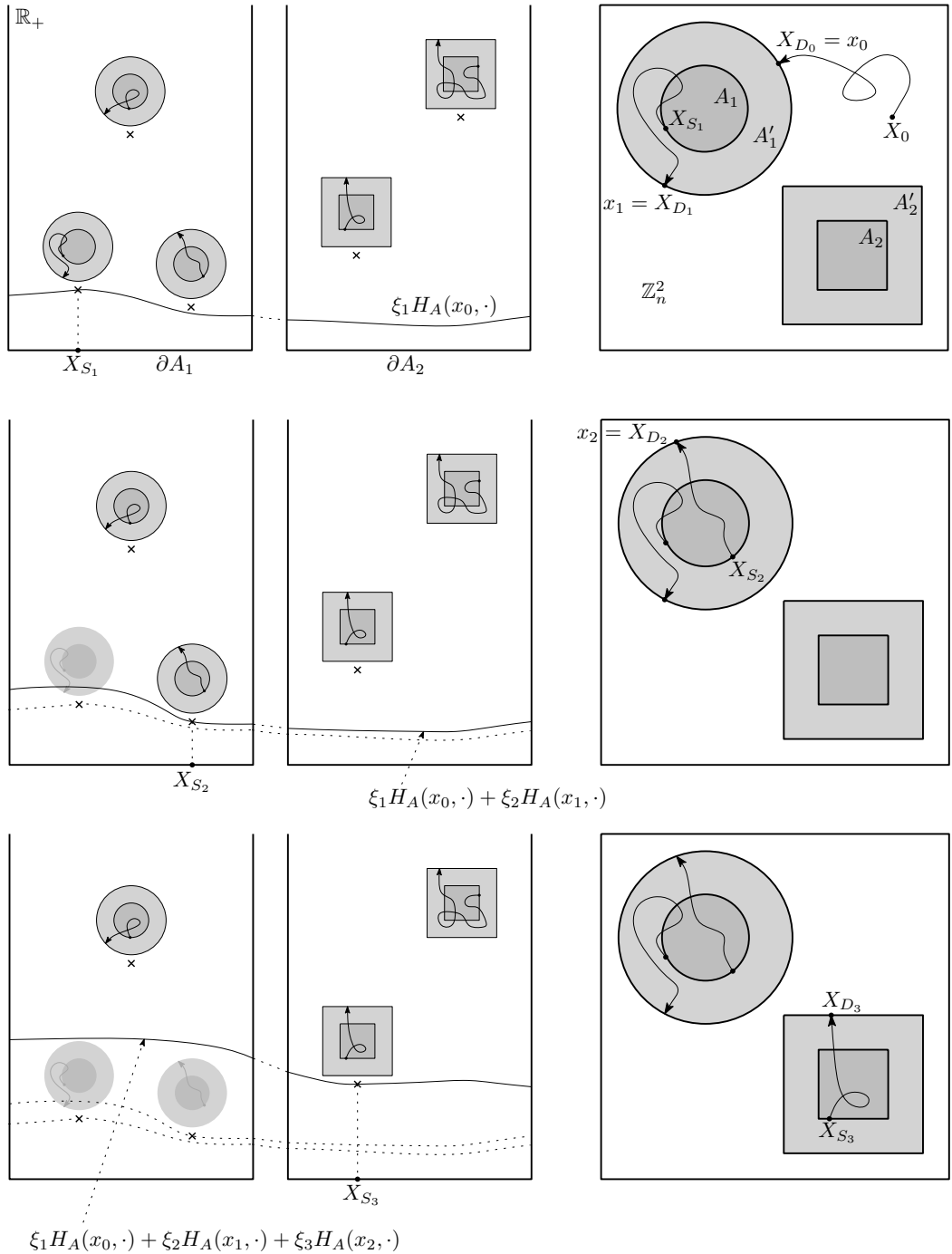}
  \caption{The construction of the excursions, the points are 
  represented with crosses, the marks are pictured above them. Observe that
we take the initial excursion (up to time~$D_0$) out
of consideration (even if $X_0\in A$).}
  \label{f:excursions}
\end{figure}

Denote by $Z_i=(X_{S_i}, \ldots, X_{D_i})$ the $i$th excursion
of~$X$ between~$\partial A$ and~$\partial A'$.
According to Section~4 of~\cite{SLT}, one can \emph{simulate}
 the sequence of excursions 
$(Z_i,i=1,2,3,\ldots)$
in the following way, see Figure~\ref{f:excursions}:
\begin{itemize}
 \item Consider a marked Poisson point process of rate~$1$
(with respect to 
(counting measure on~$\partial A$)$\times$(Lebesgue measure on~$\R_+$))
 on~$\partial A\times \R_+$, 
with independent marks.
\item These marks are the excursions of the simple random walk 
 starting at the corresponding site of~$\partial A$ and
stopped at the first visit to~$\partial A'$.
 \item At time~$D_0$ take $\xi_0>0$ such that
there is exactly one point of the Poisson process
on the graph of $\xi_0 H_A(x_0, \cdot)$ and
nothing below this graph, where $x_0=X_{D_0}$.
 \item The mark of this point is our first excursion~$Z_1$.
 \item Then, repeat the procedure, taking the 
graph of $\xi_0 H_A(x_0, \cdot)$ as ``$0$-level''.
\end{itemize}

Formally, on each ray $\{y\}\times\R_+$ (where $y\in\partial A$)
take an independent Poisson point process of rate~$1$. 
Together, these one-dimensional processes can be seen
as a random Radon measure
\[
 \eta = \sum_{\theta\in\Theta} \delta_{(z_\theta,u_\theta)}
\]
on the space~$\partial A\times \R_+$, where~$\Theta$
is a countable index set.
The marks $(\Psi_\theta, \theta\in\Theta)$ are 
independent excursions of the simple random walk, starting
at~$z_\theta$ and stopped at the first visit to~$\partial A'$.

Then (cf.\ Propositions~4.1 and~4.3 of~\cite{SLT}) define
\[
 \xi_{1} = \inf \big\{s \geq 0: \text{ there exists $\theta \in \Theta$ 
such that $s H_A(X_{D_0}, z_\theta) \geq u_\theta$}\big\},
\]
and
\[
  G_{1}(z) = \xi_{1} H_A(X_{D_0}, z), \text{ for $z \in \partial A$.}
\]
 Denote by $(z_1, u_1)$ 
 the a.s.\ unique pair in $\{(z_\theta, u_\theta)\}_{\theta \in \Theta}$
with $\xi_1 G_1( z_{1}) = u_{1}$, and let~$\Psi_1$ be the 
corresponding excursion.
Then, it holds that $\Psi_1$ is distributed as~$Z_1$ and the 
point process $\sum_{(z_\theta,u_\theta) \neq (z_1,u_1)} 
\delta_{(z_\theta, u_\theta - G_1(z_\theta))}$ is distributed as~$\eta$. 

 We can proceed iteratively to define 
$\xi_n$, $G_n$ and $(z_n,u_n)$ as follows
\begin{align*}
 \xi_{m} &=  \inf \big\{s \geq 0: 
\text{ there exists $(z_\theta, u_\theta) \notin \{(z_k, u_k)\}_{k=1}^{m-1}$}\\
&~~~~~~~~~~~~~~~~~~~~~\text{ such that }
 G_{m-1}(z_\theta) + sH_A(X_{D_{m-1}}, z_\theta) \geq u_\theta\big\},
\end{align*}
and
\[
  G_{m}(z) = G_{m-1}(z) + \xi_{m} H_A(X_{D_{m-1}}, z);
\]
then define $(z_m, u_m)$ as
 the unique pair $(z_\theta, u_\theta) \notin \{(z_k,u_k)\}_{k=1}^{m-1}$ 
with $G_m(z_\theta) = u_{\theta}$, and let~$\Psi_m$
be the corresponding excursion.
Then, one can show that $\xi_1,\xi_2,\xi_3,\ldots$ are i.i.d.\
random variables, exponentially distributed with parameter~$1$.
Also, it holds that the sequence of excursions
$(\Psi_{1}, \ldots, \Psi_{m})$ equals in law to $(Z_1, \ldots, Z_m)$, 
and these are independent from $\xi_1, \ldots, \xi_m$.
Also,
\[
 \sum_{\mathclap{\substack{\theta\in\Theta:\\
(z_\theta, u_\theta) \notin \{(z_k,u_k)\}_{k=1}^{m}}}} 
 \quad \delta_{(z_\theta, u_\theta - G_{m}(z_\lambda))}
\]
is distributed as~$\eta$ and independent of the above.
The function~$G_m$ is called the soft local time 
of the (excursion) process, the reason for this name 
is explained in Section~1.3 of~\cite{SLT}.
According to the above definitions, the soft local time 
in~$y$ up to $m$th excursion
is expressed as
\begin{equation}
\label{df_SLT}
  G_m(y) = \sum_{i=1}^{m} \xi_i H_A(X_{D_i}, y).
\end{equation}


\begin{figure}
\centering \includegraphics{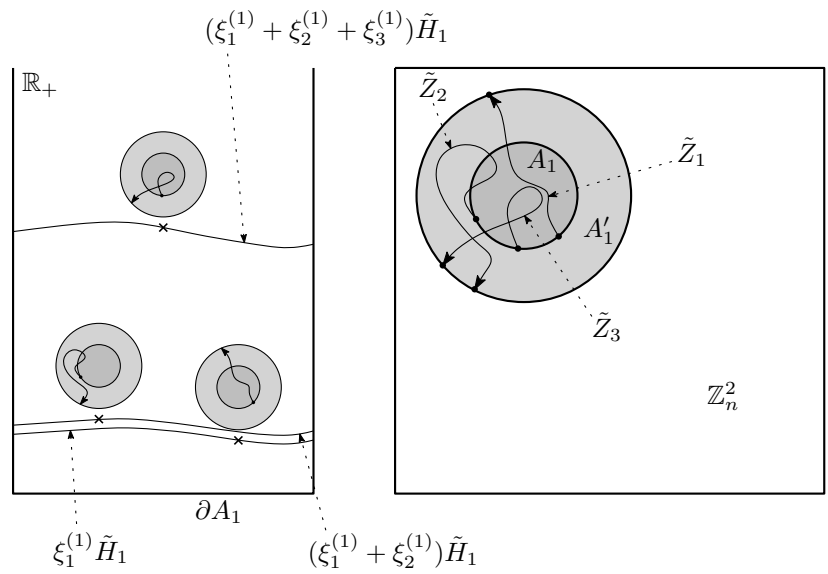}
  \caption{The construction of the i.i.d.\ excursions
between~$\partial A_j$ and~$\partial A'_j$. 
It is important to observe that the points of the
 Poisson process appear in different order 
in this construction
when compared to the corresponding excursions on Figure~\ref{f:excursions}
(note that we use \emph{the same} realization of the Poisson
process).}
  \label{f:independent_exc}
\end{figure}

We need to introduce some further notations.
Let us write $x\in Z$ when the excursion~$Z$
passes through~$x\in A$.
Consider any probability measure ${\tilde H}_j(\cdot)$
on~$\partial A_j$.
Let ${\tilde Z}_1^{(j)}, {\tilde Z}_2^{(j)}, {\tilde Z}_3^{(j)},\ldots 
\in \Sigma_j$ be a sequence of \emph{independent} elements of
the excursion space, chosen according to the following
procedure: take a starting point $x\in\partial A_j$ 
with probability~${\tilde H}_j(x)$, and then run the simple random
walk until it hits~$\partial A'_j$. Similarly 
to the previous construction of the excursions of the random walk~$X$,
we can simulate the sequence 
${\tilde Z}_1^{(j)}, {\tilde Z}_2^{(j)}, \ldots$ of independent excursions
in the same way,
and its soft local time in~$y$ up to time~$m$
equals
\begin{equation}
\label{df_SLT_indep}
  {\tilde G}^{(j)}_m(y) = {\tilde H}_j(y)\sum_{i=1}^{m} \xi^{(j)}_i,
\end{equation}
where $\big(\xi^{(j)}_1,\xi^{(j)}_2,\xi^{(j)}_3,\ldots\big)$
is another sequence of Exp(1) i.i.d.\ random variables.
For the construction of this sequence of independent excursions, 
we use \emph{the same realization} of the marked 
Poisson point process, thus creating a coupling of the sequence
of the excursions of~$X$ with~$k_0$ collections of i.i.d.\ excursions 
 (see Figure~\ref{f:independent_exc}). 
At this point we have to observe that the sequence
$(\xi_i, i\geq 1)$ is \emph{not} independent from the
collection of sequences $(\xi^{(j)}_i, i\geq 1, j=1,\ldots,k_0)$,
although this fact does not result in any major complications.

Let us denote
\[
 \sigma_1^{(j)} = \min\{i\geq 1: Z_i \in\Sigma_j\},
\]
and, for $m\geq 1$,
\[
 \sigma_{m+1}^{(j)} = \min\{i > \sigma_m^{(j)}: Z_i \in\Sigma_j\}.
\]
Then, we denote by $Z^{(j)}_i:=Z_{\sigma_i^{(j)}}$ the $i$th
excursion between~$\partial A_j$ and~$\partial A'_j$.
We also set $\psi_{j,t}=\max\{i: S_{\sigma_i^{(j)}} \leq t\}$, and
then denote by $\zeta_j(t)=\sigma_{\psi_{j,t}}^{(j)}$
the number of excursions between~$\partial A_j$ and~$\partial A'_j$
up to time~$t$ (possibly including the
last incomplete one), and by $\zeta(t)=\sum_{j=1}^{k_0}\zeta_j(t)$
the total number of excursions up to time~$t$.

For $j=1,\ldots,k_0$ and $b>a>0$ define the random variables
\begin{equation}
\label{df_Nj}
 N_j(a,b) = \#\{\theta\in\Theta :
  z_\theta\in\partial A_j, 
   a{\tilde H}(z_\theta)<u_\theta\leq b{\tilde H}(z_\theta) \}.
\end{equation}

It should be observed that the analysis of the soft local
times is considerably simpler  in this paper than in~\cite{SLT}.
This is because here the (conditional) entrance measures to~$A_j$
are typically very close to each other (as in~\eqref{eq_seau1} below).
That permits us to make \emph{sure} statements about the 
comparison of the soft local times for different processes in case
when the realization of the Poisson process in $\partial A_j\times \R_+$
is sufficiently well behaved, as e.g.\ in~\eqref{df_seau_bon} below.
\begin{lem}
\label{l_seau}
 Assume that the probability 
measures~$({\tilde H}_j,j=1,\ldots,k_0)$ are such that
for all $y\in \partial A'$, $x\in \partial A_j$, 
$j = 1,\ldots,k_0$, and some~$v\in(0,1)$, we have
\begin{align}
1-\frac{v}{3}\leq\frac{\IP_y[X_{T_n(A)}=x\mid
X_{T_n(A)}\in A_j]}{{\tilde H}_j(x)}&\leq 1+\frac{v}{3}.
\label{eq_seau1}  
\end{align}
Futhermore, define the events
\begin{align}
 U_j^{m_0} &= \big\{N_j(m,(1+v)m)< 2vm, 
\nonumber\\
&~~~~~~~~~~~(1-v)m < N_j(0,m) < (1+v)m, 
\text{ for all }m\geq m_0\big\}.
\label{df_seau_bon}
\end{align}
Then, for all~$j=1,\ldots,k_0$ it holds that
\begin{itemize}
 \item[(i)] $\IP[U_j^{m_0}]\geq 1-c_1 \exp(-c_2 vm_0)$, and
 \item[(ii)] on the event $U_j^{m_0}$ we have for all $m\geq m_0$
 \begin{align*}
  \{{\tilde Z}^{(j)}_1,\ldots,{\tilde Z}^{(j)}_{(1-v)m}\} &\subset
     \{Z^{(j)}_1,\ldots, Z^{(j)}_{(1+3v)m}\},\\
     \{Z^{(j)}_1,\ldots, Z^{(j)}_{(1-v)m}\}&\subset
       \{{\tilde Z}^{(j)}_1,\ldots,{\tilde Z}^{(j)}_{(1+3v)m}\}.
 \end{align*}
\end{itemize}

\end{lem}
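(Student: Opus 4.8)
The plan is to work entirely with the coupled realization of the marked Poisson process on $\partial A_j\times\R_+$, comparing the soft local time $G^{(j)}_m$ of the true excursion sequence $(Z^{(j)}_i)$ with the soft local time ${\tilde G}^{(j)}_m$ of the independent sequence $({\tilde Z}^{(j)}_i)$, and reading off excursion inclusions from the ordering in which the Poisson points get consumed. First I would prove (i). The random variables $N_j(0,m)$ and $N_j(m,(1+v)m)$ are, for fixed $m$, Poisson-distributed: $N_j(0,m)$ counts Poisson points below the graph of $m{\tilde H}_j$ over $\partial A_j$, which has total Lebesgue--counting mass $m\sum_{x\in\partial A_j}{\tilde H}_j(x)=m$, and similarly $N_j(m,(1+v)m)$ has mean $vm$. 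Standard Poisson deviation bounds give, for each fixed integer $m$, that the bad event has probability at most $c\exp(-c'vm)$ (using $v<1$ so that $v^2m$ and $vm$ are comparable up to constants after absorbing into $c'$; one keeps $v$ rather than $v^2$ precisely because the upper-tail Poisson bound $\IP[\mathrm{Pois}(vm)>2vm]\le e^{-c vm}$ does not lose the extra $v$). A union bound over all integers $m\ge m_0$, with the geometric-type summation $\sum_{m\ge m_0}e^{-c'vm}\le c e^{-c'vm_0}/(1-e^{-c'v})$ and a further harmless adjustment of constants, yields $\IP[U_j^{m_0}]\ge 1-c_1\exp(-c_2 vm_0)$. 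Monotonicity in the real parameter between consecutive integers lets one pass from "all integers $m\ge m_0$" to "all reals $m\ge m_0$".

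Next, the core of (ii): on $U_j^{m_0}$ I want to sandwich the true soft local time by the independent one. The true soft local time in $y\in\partial A_j$ after the walk has made $N$ excursions into $\Sigma_j$ is $G^{(j)}_N(y)=\sum_{i=1}^{N}\xi^{(j)'}_i H_{A}(X_{D_{\sigma_i}},y)$ restricted appropriately to $\partial A_j$; by the hypothesis~\eqref{eq_seau1}, every conditional entrance density $H_A(\cdot,y)$ relevant to $A_j$ lies within a factor $1\pm v/3$ of ${\tilde H}_j(y)$. Hence, writing $\Xi_N=\sum_{i=1}^N\xi^{(j)'}_i$ for the sum of the relevant Exp(1) clocks, one gets the pointwise bound
\[
(1-v/3)\,{\tilde H}_j(y)\,\Xi_N\ \le\ G^{(j)}_N(y)\ \le\ (1+v/3)\,{\tilde H}_j(y)\,\Xi_N
\]
for all $y\in\partial A_j$, whereas the independent soft local time is exactly ${\tilde G}^{(j)}_M(y)={\tilde H}_j(y)\sum_{i=1}^M{\tilde \xi}^{(j)}_i$. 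The decisive point is that both soft local times are \emph{consuming the same Poisson points} on $\partial A_j\times\R_+$: a point $(z_\theta,u_\theta)$ with $z_\theta\in\partial A_j$ has been used by the true process as soon as its soft local time exceeds $u_\theta$ at $z_\theta$, and likewise for the independent process. Therefore, if $G^{(j)}$ has at some stage dominated the graph $c\,{\tilde H}_j$ for a suitable constant $c$, it has consumed \emph{all} Poisson points below that graph, i.e.\ $N_j(0,c)$ of them; and conversely the number of points consumed controls how high the soft local time has climbed, since each consumed point's $u_\theta$ is at most the current soft local time.

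The bookkeeping then goes as follows. After $(1-v)m$ independent excursions, $\sum_{i=1}^{(1-v)m}{\tilde \xi}^{(j)}_i$ is, on a further good event (which one folds into $U_j^{m_0}$ by also controlling the clock sums, or handles by a separate elementary large-deviation estimate for sums of Exp(1)'s), close to $(1-v)m$, so ${\tilde G}^{(j)}_{(1-v)m}$ dominates roughly $(1-v)m\,{\tilde H}_j$; the independent process has thus consumed all points with $u_\theta\le (1-v)m\,{\tilde H}_j(z_\theta)$, which is exactly the $N_j(0,(1-v)m)$ lowest points, and every ${\tilde Z}^{(j)}_i$, $i\le(1-v)m$, is the mark of one of these. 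On $U_j^{m_0}$ we have $N_j(0,(1-v)m)<(1+v)(1-v)m<(1+3v)m$ and $N_j(0,(1+v)m)>(1-v)(1+v)m$, while the true process after $(1+3v)m$ excursions has soft local time at least $(1-v/3)(1+3v)m\,{\tilde H}_j\ge m\,{\tilde H}_j$ in the worst point but, more to the point, has consumed at least $(1+3v)m$ points whose heights are at least... — here one inverts the inequality $N_j(0,\cdot)$ controls to conclude the true process has by then consumed all of the lowest $(1+v)m$ points (since there are at most $N_j(m,(1+v)m)<2vm$ points in the strip $(m,(1+v)m]{\tilde H}_j$ beyond the $N_j(0,m)>(1-v)m$ points below level $m$, giving fewer than $(1+v)m$ points below level $(1+v)m$, all of which are consumed once $(1+3v)m\ge (1+v)m$ points have been). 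Comparing the two index counts yields $\{{\tilde Z}^{(j)}_1,\dots,{\tilde Z}^{(j)}_{(1-v)m}\}\subset\{Z^{(j)}_1,\dots,Z^{(j)}_{(1+3v)m}\}$, and the reverse inclusion is symmetric, swapping the roles of the two soft local times and using the same strip/level counts from~\eqref{df_seau_bon}.

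The step I expect to be the main obstacle is the last one: turning "soft local time dominates a given graph $c\,{\tilde H}_j$" into a clean two-sided comparison of the \emph{integer counts} of consumed excursions, carrying the factor $1\pm v/3$ distortion through without it swamping the $v$-margins in~\eqref{df_seau_bon}. One must check that $(1-v/3)$ and $(1+v/3)$ interact correctly with the thresholds $(1-v)m$, $m$, $(1+v)m$, $(1+3v)m$ — in particular that the $3v$ on the right-hand side of the inclusions is genuinely enough slack to absorb both the entrance-measure distortion and the Poisson/clock fluctuations of size $vm$ — and that none of the intermediate inequalities require $m_0$ to be more than a constant (times $v^{-1}$). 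This is the place where the specific numerology $2vm$, $(1\pm v)m$, $(1+3v)m$ was evidently reverse-engineered, and where I would be most careful; everything else is Poisson and Exp(1) concentration that I would only sketch.
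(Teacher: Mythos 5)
Your high-level plan (couple both excursion sequences to the same marked Poisson process, exploit the near-equality of entrance laws from~\eqref{eq_seau1}, and read off the inclusions from the Poisson-point counts) is the right one and is indeed what the paper does, and part (i) is essentially correct. But the central step of your part (ii) — the pointwise sandwich $(1-v/3){\tilde H}_j(y)\Xi_N \le G^{(j)}_N(y)\le(1+v/3){\tilde H}_j(y)\Xi_N$, with $\Xi_N$ a sum of the Exp(1) clocks attached to the excursions into $A_j$ — is not a correct statement about the soft local time. By~\eqref{df_SLT}, $G_m(y)=\sum_{i=1}^m \xi_i H_A(X_{D_{i-1}},y)$ accumulates a contribution at \emph{every} excursion of $X$ between $\partial A$ and $\partial A'$, not only at those that enter $A_j$, and for $y\in\partial A_j$ each term factorizes as $\xi_i\,\IP_{X_{D_{i-1}}}[X_{T_n(A)}=y\mid X_{T_n(A)}\in A_j]\cdot\IP_{X_{D_{i-1}}}[X_{T_n(A)}\in A_j]$. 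Hypothesis~\eqref{eq_seau1} controls only the first (conditional) factor; the second factor is a $y$-independent scalar that varies over $i$ and is far from $1$. So $G_m(y)$ is not close to ${\tilde H}_j(y)$ times any clock sum, and your sandwich fails. The paper's observation is precisely that this scalar cancels when you compare $G_k(y)/{\tilde H}_j(y)$ at two different $y\in\partial A_j$: one gets the \emph{sure} ratio bound $G_k(y)/{\tilde H}_{j_0}(y)\le(1+v)\min_{y'}G_k(y')/{\tilde H}_{j_0}(y')$ for all $k$, and this near-flatness of $G_k/{\tilde H}_{j_0}$ is what replaces your sandwich.

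Two further issues follow from this. First, you invoke a concentration estimate for the Exp(1) clock sums and propose to fold it into $U_j^{m_0}$; but $U_j^{m_0}$ as defined in~\eqref{df_seau_bon} involves only Poisson counts, so this would change the lemma being proved, and the argument in fact never needs clock concentration: given the ratio bound, at $k=\sigma^{(j_0)}_{(1-v)m}$ the minimum of $G_k/{\tilde H}_{j_0}$ is forced to be $\le m$ by $N_j(0,m)>(1-v)m$, whence $G_k\le(1+v)m\,{\tilde H}_{j_0}$ everywhere on $\partial A_{j_0}$ and the $(1-v)m$ consumed points lie among the $N_j(0,(1+v)m)<(1+3v)m$ lowest ones; a symmetric argument at $k'=\sigma^{(j_0)}_{(1+3v)m}$ gives the other inclusion. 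Second, in your final count you implicitly assume the $X$-process consumes points in increasing order of $u_\theta/{\tilde H}_j(z_\theta)$ (``all of which are consumed once $(1+3v)m$ points have been''); that ordering is correct for the \emph{independent} process, whose soft local time is an exact multiple of ${\tilde H}_j$, but for the $X$-process it only follows after you establish the $(1+v)$-flatness of $G_k/{\tilde H}_j$. Replacing the wrong sandwich with the ratio bound fixes both of these and brings you to the paper's proof.
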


\begin{proof}
Fix any $j_0\in \{1,\ldots,k_0\}$ and
observe that $N_{j_0}(a,b)$ has Poisson distribution 
with parameter~$b-a$.
It is then straightforward to obtain~(i) using the usual large deviation bounds.

To prove~(ii), fix $k\geq 1$ and let
\[
 y_{j_0}^{(k)} = \argmin_{y\in\partial A_{j_0}} \frac{G_k(y)}{{\tilde H}_{j_0}(y)}
\]
(with the convention $0/0=+\infty$).
We then argue that for all~$k\geq 1$ we \emph{always} have
\begin{equation}
\label{<(1+v)}
 \frac{G_k(y)}{{\tilde H}_{j_0}(y)} \leq (1+v) \frac{G_k(y_{j_0}^{(k)})}{{\tilde H}_{j_0}(y_{j_0}^{(k)})}
  \qquad \text{for all }y\in \partial A_{j_0}.
\end{equation}
Indeed, by~\eqref{eq_seau1} we have
\begin{align*}
 \frac{G_k(y)}{{\tilde H}_{j_0}(y)} &=
    \frac{1}{{\tilde H}_{j_0}(y)}\sum_{\ell=1}^{k}\xi_\ell H_A(X_{D_{\ell-1}}, y)\\
    &= \sum_{\ell=1}^{k}\xi_\ell\frac{\IP_{X_{D_{\ell-1}}}[X_{T_n(A)}=y\mid 
 X_{T_n(A)}\in A_{j_0}]}
     {{\tilde H}_{j_0}(y)} \IP_{X_{D_{\ell-1}}}[X_{T_n(A)}\in A_{j_0}]\\
    &\leq \frac{1+\frac{v}{3}}{1-\frac{v}{3}}\cdot
    \sum_{\ell=1}^{k}\xi_\ell\frac{\IP_{X_{D_{\ell-1}}}[X_{T_n(A)}=y^{(k)}_0
    \mid X_{D_{\ell-1}}
\in A_{j_0}]}
     {{\tilde H}_{j_0}(y_{j_0}^{(k)})} \IP_{X_{D_{\ell-1}}}[X_{T_n(A)}\in A_{j_0}]\\
     &\leq (1+v) \frac{G_k(y_{j_0}^{(k)})}{{\tilde H}_{j_0}(y_{j_0}^{(k)})},
\end{align*}
since $(1+\frac{v}{3})/(1-\frac{v}{3})\leq 1+v$ for $v\in (0,1)$.

\begin{figure}
\centering \includegraphics{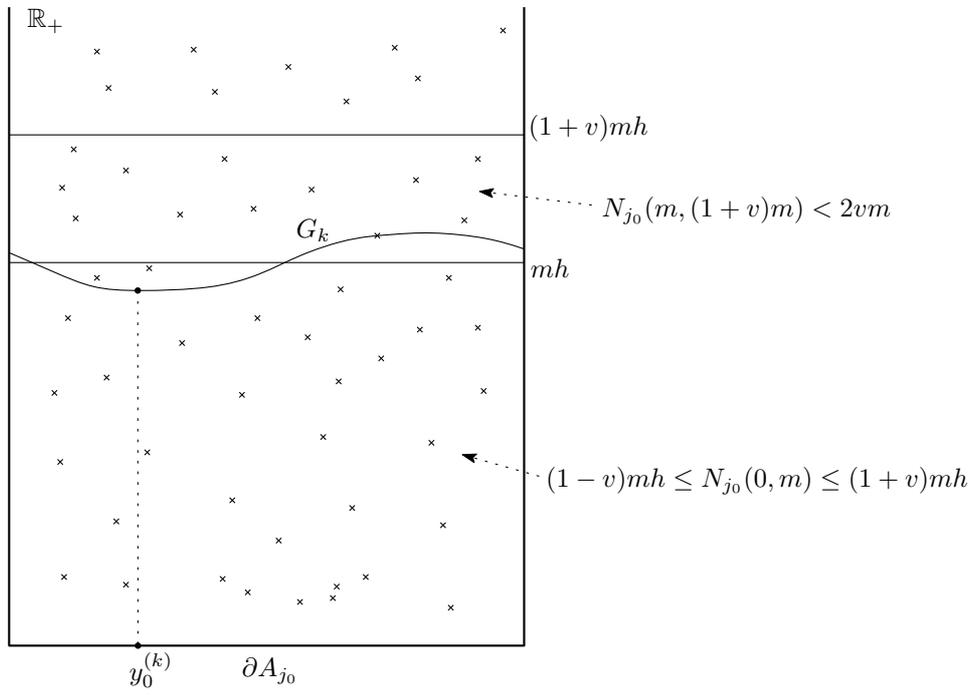}
  \caption{On the proof of Lemma~\ref{l_seau}. For simplicity, 
 here we assumed that ${\tilde H}_{j_0}\equiv h$ for a positive constant~$h$.}
  \label{f:compare_clt}
\end{figure}
Now, let~$m\geq m_0$, and abbreviate $k=\sigma^{(j_0)}_{(1-v)m}$.
 We then have
 $\frac{G_k(y_{j_0}^{(k)})}{{\tilde H}_{j_0}(y_{j_0}^{(k)})}\leq m$
(because otherwise, recall~\eqref{df_seau_bon}, we would
have more than~$(1-v)m$ points of the Poisson process below
the graph of~$G_k$), 
 and so, by~\eqref{<(1+v)}, $\frac{G_k(y)}{{\tilde H}_{j_0}(y)}\leq (1+v)m$
for all $y\in \partial A_{j_0}$ (see Figure~\ref{f:compare_clt}), which implies that 
\[
 \{Z^{(j)}_1,\ldots, Z^{(j)}_{(1-v)m}\} 
 \subset \{{\tilde Z}^{(j)}_1,\ldots,{\tilde Z}^{(j)}_{(1+3v)m}\}.
\]
 Analogously, for $k'=\sigma^{(j_0)}_{(1+3v)m}$ we must have
  $\frac{G_{k'}(y_0^{(k')})}{{\tilde H}_{j_0}(y_0^{(k')})}\geq m$
(because otherwise $\frac{G_{k'}(\cdot)}{{\tilde H}_{j_0}(\cdot)}$ 
would lie strictly below $(1+v)m$, and we would have
$N_j(0,(1+v)m)<(1+3v)m$), so
\[
 \{{\tilde Z}^{(j)}_1,\ldots,{\tilde Z}^{(j)}_{(1-v)m}\} \subset
     \{Z^{(j)}_1,\ldots, Z^{(j)}_{(1+3v)m}\},
\]
which concludes the proof of Lemma~\ref{l_seau}.
\end{proof}

%
%
%

\section{Proof of Theorem~\ref{t_lower}}
\label{s_proof_lowertail}


The proof is divided into two parts. First, in Section~\ref{s_upbound} we
use the method of soft local times to prove the second inequality in~\eqref{eq_t_lower}.
Then, in order to prove the first inequality 
in~\eqref{eq_t_lower} we present a particular strategy
for the walk, that assures that the torus will be covered with a not-too-small
probability by time $\frac{4}{\pi}\gamma n^2\ln^2 n$.

\subsection{Upper bound}
\label{s_upbound}

Note that for any fixed $x\in \Z^2_n$ 
there is a natural bijection of~$\Z^2_n$
and $[1,n]^2\subset \Z^2$ in such a way 
that~$x$ is mapped to 
$\big(\lceil\frac{n}{2}\rceil,\lceil\frac{n}{2}\rceil\big)\in\Z^2$.
Then, for $y\in \Z^2_n$ define~$\|y-x\|$ to be the Euclidean 
distance between $\big(\lceil\frac{n}{2}\rceil,\lceil\frac{n}{2}\rceil\big)$ 
and the image of~$y$, and we define also $\|y-x\|_1$ and~$\|y-x\|_\infty$
to be the~$\ell_1$ and the~$\ell_\infty$ distances correspondingly.
For $r < \frac{n}{2}$ we then define the discrete ball $B(x,r)\in\Z^2_n$
as the set of sites which are mapped by this bijection to the 
Euclidean ball of radius~$r$ centered 
in~$\big(\lceil\frac{n}{2}\rceil,\lceil\frac{n}{2}\rceil\big)$.



Define excursions between the balls $B(0,r)$ and $B(0,R)$ 
as in Section~\ref{s_SLT} (with $A_1=B(0,r)$, $A'_1=B(0,R)$,
$k_0=1$).

Now, we need to control the time it takes to complete the $j$th
excursion (see Lemma~3.2 of~\cite{DPRZ06}):
\begin{lem}
\label{l_number_exc}
 There exist $\delta_0>0$, $c>0$ such that if $r< R\leq \frac{n}{2}$
and $\delta\leq \delta_0$ with $\delta\geq 6c_1 (\frac{1}{r}+\frac{r}{R})$,
we have for all $x_0\in\Z^2_n$
\begin{equation}
\label{eq_number_exc}
\IP_{x_0}\Big[
D_j \leq (1+\delta)\frac{2n^2\ln\frac{R}{r}}{\pi}j\Big] \geq
 1-\exp\Big(-\frac{c\delta^2\ln\frac{R}{r}}{\ln\frac{n}{r}}j\Big).
\end{equation}
\end{lem}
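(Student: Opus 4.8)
The plan is to translate the event $\{D_j\le T\}$, with $T:=(1+\delta)\frac{2n^2\ln(R/r)}{\pi}j$, into a lower bound on the number of completed excursions between $\partial B(0,r)$ and $\partial B(0,R)$ by time $T$ (indeed $\{D_j\le T\}$ is exactly the event that at least $j$ of them are completed by time $T$), and to obtain that lower bound by a Chernoff estimate carried out \emph{at the torus scale}. Fix a macroscopic radius $\rho=n/4$; if $R\ge n/4$ the statement is easier, since then $\ln(R/r)\asymp\ln(n/r)$ and the excursions are already essentially on the torus scale, so suppose $R<n/4$ and introduce the coarser excursions of the walk between $\partial B(0,r)$ and $\partial B(0,\rho)$ — call them \emph{macro-cycles}, with $\widetilde D_i$ the completion time of the $i$-th one. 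Every macro-cycle necessarily crosses $\partial B(0,R)$, so the $\partial B(0,r)$--$\partial B(0,R)$ excursions are grouped by the macro-cycle in which they occur, and $D_j\le\widetilde D_i$ as soon as at least $j$ of them fall within the first $i$ macro-cycles.

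Two estimates feed into this. First, by the classical hitting-time bounds for discs on the torus (the two-sided form of Lemma~3.3 of~\cite{DPRZ06}; cf.~\eqref{eq_p_ht<}--\eqref{eq_p_ht>}), a macro-cycle started from any point of $\partial B(0,\rho)$ has mean duration $(1+o(1))\tfrac{2n^2}{\pi}\ln(\rho/r)$ and an exponential tail on the scale $n^2\ln(n/r)$, while $\widetilde D_0$ (the time to first reach $\partial B(0,\rho)$) has mean $O(n^2)$ and an exponential tail, $\rho$ being a fixed fraction of $n$. Since the mean and the tail-scale of a macro-cycle are of the same order, the exponential Chebyshev inequality applied to a sum of $M$ macro-cycle durations shows that in time $T$ at least $M:=(1-\tfrac\delta4)(1+\delta)\,j\,\tfrac{\ln(R/r)}{\ln(\rho/r)}$ macro-cycles are completed, outside an event of probability $\le\exp(-c\delta^2M)$ (a Gaussian rate in the \emph{number} of cycles). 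Second, each time the walk sits on $\partial B(0,R)$ having just visited $\partial B(0,r)$, it either returns to $\partial B(0,r)$ before reaching $\partial B(0,\rho)$ — probability $\tfrac{\ln(\rho/R)}{\ln(\rho/r)}$ up to a multiplicative $1+O(\tfrac1r+\tfrac rR)$, coming from the near-uniformity of the entrance laws to $\partial B(0,r)$ and $\partial B(0,R)$ (exactly the situation quantified in~\eqref{eq_seau1}), by the two-dimensional ``gambler's ruin'' for $\log\|X_t\|$ — or it escapes and the macro-cycle ends. Hence, conditionally on the skeleton and up to that distortion, the number of $\partial B(0,r)$--$\partial B(0,R)$ excursions inside a macro-cycle is an i.i.d.\ copy of $1+\mathrm{Geom}(q)$ with $q:=\tfrac{\ln(R/r)}{\ln(\rho/r)}$, so, writing $q_+=q(1+O(\tfrac1r+\tfrac rR))$ for the (slightly inflated) escape probability and using the negative-binomial/binomial duality, the first $M$ macro-cycles contain fewer than $j$ excursions with probability at most $\IP[\mathrm{Bin}(j,q_+)\ge M]$.

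To combine: the hypothesis $\delta\ge 6c_1(\tfrac1r+\tfrac rR)$, with $c_1$ chosen at least the constant implicit above, forces $M\ge(1+c\delta)\,jq_+$, hence $\IP[\mathrm{Bin}(j,q_+)\ge M]\le\exp(-c\delta^2jq_+)$, and also $M\,(1+o(1))\tfrac{2n^2}{\pi}\ln(\rho/r)=(1-\tfrac\delta4)(1+o(1))\,T\le(1-c\delta)T$ for $\delta\le\delta_0$ small and $n$ large, so the macro-cycle-count bound applies. Both exceptional probabilities are at most $\exp\big(-c\delta^2 j\tfrac{\ln(R/r)}{\ln(\rho/r)}\big)\le\exp\big(-c\delta^2 j\tfrac{\ln(R/r)}{\ln(n/r)}\big)$ since $\rho<n$, and a union bound gives~\eqref{eq_number_exc}, with $\delta_0$ and $c$ fixed at the end. (In the parameter range where the right-hand side of~\eqref{eq_number_exc} exceeds $\tfrac12$ the claim is weak and, correspondingly, the errors above need not be small relative to $\delta$; there one argues crudely instead — the walk may reach $\partial B(0,R)$, descend to $\partial B(0,r)$ directly, and return, each step of at least a polynomially small probability — so this regime causes no difficulty.)

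The step I expect to be the real work is the second estimate: getting the per-macro-cycle excursion count with a $1+O(\tfrac1r+\tfrac rR)$ error, which requires controlling the deviation of the walk's entrance measures to $\partial B(0,r)$ and to $\partial B(0,R)$ from the idealized harmonic measures, and checking that the genuinely lower-order contributions — the outward legs $\partial B(0,r)\to\partial B(0,R)$, the returns $\partial B(0,r)\to\partial B(0,\rho)$, and the discrepancy $\|X_t\|\ne r$ on $\partial B(0,r)$ — are negligible \emph{on the scale $\tfrac1r+\tfrac rR$ relative to $\tfrac{2n^2}{\pi}\ln(R/r)$}; this is where the hypotheses on $r$, $R$, $\delta$ (and largeness of $n$) are genuinely used. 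Once the effective number of independent increments is recognized to be $\asymp j\tfrac{\ln(R/r)}{\ln(n/r)}$, the probabilistic part is a routine two-sided large-deviation estimate.
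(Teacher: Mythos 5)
The paper does not actually prove this lemma: it is imported directly from Lemma~3.2 of \cite{DPRZ06}, cited without proof. So there is no in-paper argument to compare against, and what you have written is a genuine, self-contained derivation.

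Your macro-cycle decomposition is a sound route and lands on the correct rate. The essential difficulty the lemma hides is the mismatch between the mean excursion length $\mu\asymp n^2\ln(R/r)$ and the exponential tail scale $\lambda\asymp n^2\ln(n/r)$: a crude sub-exponential Chernoff that uses only the tail scale would produce the weaker exponent $\delta^2 j\,(\mu/\lambda)^2$. Your grouping into $\partial B(0,r)$--$\partial B(0,\rho)$ cycles with $\rho\asymp n$ fixes this because each cycle has mean and tail both of order $\lambda$, so one gets the full $\delta^2$ rate per cycle; combined with the geometric count $1+\mathrm{Geom}(q)$, $q\asymp\mu/\lambda$, of $r$--$R$ excursions per cycle, the effective number of ``independent'' increments is $jq\asymp j\ln(R/r)/\ln(n/r)$, which is exactly the exponent in~\eqref{eq_number_exc}. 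This is equivalent to, but more transparent than, tracking that $\IE[\tau^2]\asymp\mu\lambda$ (not $\lambda^2$) in a direct exponential-moment estimate on the $r$--$R$ excursion times, which is roughly what the argument in \cite{DPRZ06} does. The two places you flag as real work are correctly identified: (i) the $1+O(1/r+r/R)$ control of the entrance laws on $\partial B(0,r)$ and $\partial B(0,R)$, which is what the hypothesis $\delta\ge 6c_1(1/r+r/R)$ is there to dominate, and (ii) the mild dependence of macro-cycle lengths and counts through their starting points, which one handles the same way as in Lemma~\ref{l_seau} by near-uniformity of the entrance measures. One should also remember the initial leg $\widetilde D_0$ (mean $O(n^2)$, absorbed into the slack since $\delta T\gg n^2$ when the bound is nontrivial) and the boundary case $R\gtrsim n$ where no macro-scale exists, both of which you dispose of correctly. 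I see no gap in the strategy.
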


Next, let us obtain the following consequence of Lemma~\ref{l_seau}:
\begin{lem}
\label{l_couple_cover}
Let $0<r_n<R_n<n/3$ be such that $r_n\geq \frac{n}{\ln^h n}$ for some~$h>0$.
Then for any $\phi\in(0,1)$, there exists $\delta>0$ such that
if ${\tilde H}$ is a probability measure on~$\partial B(0,r_n)$
with
\begin{equation}
\label{good_tildeH}
 \sup_{\substack{z\in \partial B(0,R_n)\\y\in \partial B(0,r_n)}}
\Big|\frac{H_{B(0,r_n)}(z,y)}{{\tilde H}(y)}-1\Big| < \delta
\end{equation}
then, as $n\to \infty$,
\begin{equation}
\label{eq_couple_cover1}
\IP\big[\text{there exists }y\in B(0,r_n) \text{ such that }
y\notin {\tilde Z}_j \text{ for all }j\leq k_0(n)\big] \to 1,
\end{equation}
where ${\tilde Z}_1,{\tilde Z}_2, {\tilde Z}_3,\ldots$ 
are i.i.d.\ excursions between~$\partial B(0,r_n)$
and $\partial B(0,R_n)$ with entrance measure~${\tilde H}$,
and $k_0(n)=2\phi\frac{\ln^2 R_n}{\ln R_n/r_n}$. 
\end{lem}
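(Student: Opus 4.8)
The plan is to reduce the statement about i.i.d.\ excursions $\tilde Z_1,\tilde Z_2,\ldots$ with entrance measure $\tilde H$ to a first‐moment (expectation) computation for the number of \emph{uncovered} sites in $B(0,r_n)$, and then to upgrade this to a high‐probability statement via a second‐moment or concentration argument. Concretely, for a fixed site $y\in B(0,r_n)$ (say deep inside, at distance of order $r_n/2$ from $\partial B(0,r_n)$), a single excursion starting from $\partial B(0,r_n)$ and run until hitting $\partial B(0,R_n)$ hits $y$ with probability of order $\frac{\ln(R_n/r_n)}{\ln R_n}$ — this is the standard two‐dimensional hitting estimate, obtained from the potential‐kernel / harmonic‐measure computation on the annulus between $B(0,r_n)$ and $B(0,R_n)$, and is exactly the kind of estimate behind Lemma~3.3 of~\cite{DPRZ06} and behind~\eqref{eq_p_ht<}--\eqref{eq_p_ht>}. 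Hence $\IP[y\notin \tilde Z_j \text{ for all } j\le k_0(n)]\approx \bigl(1-c\frac{\ln(R_n/r_n)}{\ln R_n}\bigr)^{k_0(n)}$; with $k_0(n)=2\phi\frac{\ln^2 R_n}{\ln(R_n/r_n)}$ and $\phi<1$ this is of order $\exp(-2\phi c\ln R_n)=R_n^{-2\phi c+o(1)}$, which is much larger than $|B(0,r_n)|^{-1}\asymp r_n^{-2}$ once one tracks constants carefully (here the constant $c$ in the hitting probability tends to $\frac1\pi$ up to the $(1\pm o(1))$ factor, and $\phi<1$ is what makes the exponent work, so that the expected number of uncovered sites tends to infinity, even polynomially).

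First I would make the hitting estimate precise and uniform in the starting point on $\partial B(0,r_n)$ and in the target $y$ ranging over an inner subball, obtaining $\IP_z[T(y)>T(\partial B(0,R_n))]=1-\frac{(1+o(1))\ln(R_n/r_n)}{\ln(R_n/r_n)+\ln R_n}$ or the analogous expression; the condition $r_n\ge n/\ln^h n$ together with $R_n<n/3$ keeps all these quantities comparable and the error terms $o(1)$ uniform. Next, using the assumption~\eqref{good_tildeH} that $\tilde H$ is within $\delta$ of the true entrance law $H_{B(0,r_n)}(z,\cdot)$ (uniformly in $z\in\partial B(0,R_n)$), I would note that the excursions $\tilde Z_j$ are, up to a $(1\pm O(\delta))$ distortion, distributed like genuine random walk excursions between the two circles; in particular for each single $\tilde Z_j$ and each inner $y$ the non‐hitting probability is $(1+O(\delta))$ times the honest one, so choosing $\delta$ small (depending on $\phi$) preserves the key inequality $\mathbb E[\#\{\text{uncovered }y\}]\to\infty$. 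Then I would let $W_n=\#\{y\in B(0,r_n/2): y\notin\tilde Z_j\ \forall j\le k_0(n)\}$, show $\mathbb E[W_n]\to\infty$ (in fact polynomially), and control the variance: $\mathrm{Var}(W_n)=\sum_{y,y'}\bigl(\IP[y,y' \text{ both uncovered}]-\IP[y\text{ unc.}]\IP[y'\text{ unc.}]\bigr)$, where the two‐point non‐hitting probability is estimated by the same annulus potential theory; the correlations are positive but weak enough that $\mathrm{Var}(W_n)=o(\mathbb E[W_n]^2)$, so that by Chebyshev $\IP[W_n\ge 1]\to 1$, which is~\eqref{eq_couple_cover1}.

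The main obstacle I expect is the two‐point estimate needed for the variance bound: one must show that for two sites $y,y'$ deep inside $B(0,r_n)$, the events ``$y$ is missed by a given excursion'' and ``$y'$ is missed by that excursion'' are only mildly positively correlated — quantitatively, $\IP[y,y'\notin \tilde Z_j]\le (1+o(1))\,\IP[y\notin\tilde Z_j]\,\IP[y'\notin\tilde Z_j]$ uniformly over $y,y'$ (or at worst with a harmless multiplicative slack that does not destroy the second‐moment argument), and then bootstrap this over the $k_0(n)$ independent excursions. This requires a careful harmonic‐measure computation on the twice‐punctured annulus, splitting into the regimes $\|y-y'\|$ small versus comparable to $r_n$; the small‐distance regime is where the correlation is strongest and must be shown to contribute negligibly to $\mathrm{Var}(W_n)$ after summing. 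Once this is in hand, the rest is bookkeeping with the $o(1)$'s and with the $(1\pm\delta)$ distortion coming from~\eqref{good_tildeH}; alternatively, if the second‐moment route is too delicate, one can instead invoke Lemma~\ref{l_seau} together with Lemma~\ref{l_number_exc} to compare with honest random‐walk excursions and transfer a known ``many late points survive'' statement (in the spirit of~\cite{DPRZ06}) to the i.i.d.\ setting, but I would attempt the direct first/second moment computation first as it is more self‐contained.
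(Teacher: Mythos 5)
Your first--moment computation is sound and matches the heuristic: a single excursion from $\partial B(0,r_n)$ to $\partial B(0,R_n)$ hits a site $y$ deep inside the ball with probability $p\approx\frac{\ln(R_n/r_n)}{\ln R_n}$, so after $k_0(n)=2\phi\frac{\ln^2 R_n}{\ln(R_n/r_n)}$ i.i.d.\ excursions the non-hitting probability is $\approx R_n^{-2\phi}$ and, since $\phi<1$, the expected number of uncovered sites in $B(0,r_n)$ grows polynomially in $n$. The gap is in the second-moment step: your claim that $\mathrm{Var}(W_n)=o(\IE[W_n]^2)$ does \emph{not} hold with the naive computation. Writing $q(\rho)\approx 1-\frac{\ln\rho}{\ln R_n}$ for the conditional probability that an excursion hitting $y$ also hits $y'$ at distance $\rho$, one gets
\[
\frac{\IP[y,y'\text{ both uncovered}]}{\IP[y\text{ unc.}]\,\IP[y'\text{ unc.}]}\;\approx\; R_n^{2\phi q(\rho)}\;=\;R_n^{2\phi}\rho^{-2\phi},
\]
and summing $\rho\cdot\bigl(R_n^{2\phi}\rho^{-2\phi}-1\bigr)$ over $1\le\rho\le r_n$ (using $1-2\phi>-1$) yields $\mathrm{Var}(W_n)/\IE[W_n]^2\asymp (R_n/r_n)^{2\phi}$, which does not tend to $0$ --- indeed in the paper's application $R_n/r_n$ is a fixed constant. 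Chebyshev or Paley--Zygmund then give only $\IP[W_n>0]\geq c>0$, not convergence to $1$ as the lemma requires. Upgrading from a positive constant to probability tending to $1$ is precisely the hard content of the Dembo--Peres--Rosen--Zeitouni late-points analysis (a refined, multi-scale/truncated second-moment argument, not a one-shot Chebyshev), so the ``direct'' route you propose is in fact the deep part of the problem, not a shortcut around it.

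The alternative you mention in your last sentence is the paper's actual proof, and it is the one that works cleanly: Lemma~\ref{l_seau} provides a deterministic (on a good event) coupling so that $\{\tilde Z_1,\ldots,\tilde Z_{k_0(n)}\}\subset\{Z_1,\ldots,Z_{(1+\delta')k_0(n)}\}$ for the honest random-walk excursions $Z_i$, Lemma~\ref{l_number_exc} bounds the time $D_{(1+\delta')k_0(n)}$ by $\frac{4}{\pi}bn^2\ln^2 n$ with $b<1$, and Theorem~1.2 of~\cite{DPRZ06} already says a ball of radius $\geq n/\ln^h n$ is not fully covered by that time with probability tending to $1$. No new second-moment estimate is needed; the point of the soft-local-time coupling is exactly to inherit the DPRZ conclusion without redoing it. I would therefore lead with that reduction rather than with the first/second-moment attempt, and, if you do want to pursue the direct route, be explicit that a multi-scale refinement of the second moment (in the spirit of DPRZ) is required, since the crude variance bound fails.
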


\begin{proof}
 Lemma~\ref{l_seau} implies that
one can choose a small enough~$\delta>0$ in such a way that one may couple
 the independent excursions with 
the excursion process $Z_1,Z_2,Z_3,\ldots$ of the random walk~$X$ on~$\Z^2_n$
so that
\[
 \{{\tilde Z}_1,\ldots, {\tilde Z}_{k_0(n)}\}
 \subset \{Z_1,\ldots,Z_{(1+\delta')k_0(n)}\}
\]
 with probability converging to~$1$ with~$n$, where~$\delta'>0$
is such that $(1+\delta')\phi<1$. 
Now, choose~$b$ such that $(1+\delta')\phi<b<1$
and observe that 
Theorem~1.2 of~\cite{DPRZ06} implies that a fixed
ball with radius at least $\frac{n}{\ln^h n}$ will not be completely 
covered up to time~$\frac{4}{\pi}bn^2\ln^2 n$ with probability
converging to~$1$. 
Together with Lemma~\ref{l_number_exc} this
implies that
\[
 \IP[B(0,r_n)\text{ is not completely covered by }
\{Z_1,\ldots,Z_{(1+\delta')k_0(n)}\}] \to 1
\]
 as $n\to\infty$, and this completes the proof of~\eqref{eq_couple_cover1}.
\end{proof}

We continue the proof of the upper bound in Theorem~\ref{t_lower}.
Fix an arbitrary $\alpha\in(\sqrt{\gamma},1)$, and let us 
denote
\[
s_n =\frac{n}{\lfloor n^{1-\alpha}\rfloor},\phantom{**}  k_n=\lfloor n^{1-\alpha}\rfloor^2.
\]
Let us tile the (continuous) torus $\R^2_n:=\R^2/n\Z^2$
with~$k_n$ squares with side~$s_n $.
Let us enumerate the squares in some way, and let $x'_1,\ldots,x'_{k_n}$ be
the sites at the centers of these squares. 
We then consider some isometric immersion of the torus~$\Z^2_n$
into $\R^2_n$, and denote by $x_1,\ldots,x_{k_n}\in \Z^2_n$
the (discrete) sites closest to $x'_1,\ldots,x'_{k_n}\in\R^2_n$.

Fix a small enough~$b\in(0,1/3)$ (to be specified later), and define
$A_j=B(x_j,bs_n )$, $A'_j=B(x_j,s_n /3)$;
also, as before, set $A=\bigcup_{j=1}^{k_n}A_j$ 
and $A'=\bigcup_{j=1}^{k_n}A'_j$. We construct 
the excursions of the random walk~$X$ between~$\partial A_j$ 
and $\partial A'_j$,
$j=1,\ldots,k_n$, as in Section~\ref{s_SLT}.
Then, fix any site $z_0\notin A'$
and define ${\tilde H}_j(x)=\IP_{z_0}[X_{T_n(A_j)}=x]$.

We need to show that the entrance measures to $A_j$, $j=1,\ldots,k_n$,
are ``almost equal to ${\tilde H}_j$'' 
on the boundary of each ball, if the parameter~$b$
are suitably chosen:
\begin{lem}
\label{l_flat_entrance}
 For any $\eps>0$ we can choose~$b\in (0,1/3)$ in such a way that 
for all $y\in \partial A'$, $x\in \partial A_j$, 
$j = 1,\ldots,k_n$, we have
\begin{align}
1-\eps\leq\frac{\IP_y[X_{T_n(A)}=x\mid
X_{T_n(A)}\in A_j]}{{\tilde H}_j(x)}&\leq 1+\eps,
\label{eq_flat_entrance1} 
\end{align}
\end{lem}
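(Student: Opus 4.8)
The plan is to establish Lemma~\ref{l_flat_entrance} by reducing the statement about the joint entrance measure $H_A$ (restricted to the event of entering a specific ball $A_j$) to a statement about the single-ball entrance measure, and then to prove that the single-ball entrance law from far away is nearly constant on $\partial B(x_j, b s_n)$ when $b$ is small. For the first reduction, observe that conditionally on $X_{T_n(A)} \in A_j$, the event that the walk enters $A_j$ before any other $A_i$ forces the walk to have reached $\partial A'_j$ at some point (since the $A'_i$ are disjoint and $A_j \subset A'_j$); by the strong Markov property applied at the last visit to $\partial A'_j$ before hitting $A$, the conditional law of $X_{T_n(A)}$ is a mixture over entrance points $z \in \partial A'_j = \partial B(x_j, s_n/3)$ of $H_{A_j}(z, \cdot) = \IP_z[X_{T_n(A_j)} = \cdot]$. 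The same mixture structure applies to ${\tilde H}_j(x) = \IP_{z_0}[X_{T_n(A_j)} = x]$ once one notes that from $z_0 \notin A'$ the walk must also pass through $\partial A'_j$ before hitting $A_j$, so ${\tilde H}_j$ is itself a mixture of the $H_{A_j}(z, \cdot)$ over $z \in \partial A'_j$ (with possibly different weights). Hence it suffices to show that $H_{A_j}(z, x)$, as a function of $x \in \partial B(x_j, b s_n)$, varies by a factor at most $1+\eps'$ as $x$ ranges over the boundary, uniformly in the starting point $z \in \partial B(x_j, s_n/3)$; then the same bound survives any mixture.

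The second, analytic step is the heart of the matter: showing that the harmonic measure of $\partial B(x_j, \rho)$ seen from distance $\geq 3\rho$ (here $\rho = b s_n$) is nearly uniform, with the approximation improving as the ratio of the radii grows. The natural tool is the potential-kernel (Green's function) representation of the hitting distribution of a disc for two-dimensional simple random walk: if $a(\cdot)$ is the potential kernel of SRW on $\Z^2$, then for $z$ outside $B(x_j,\rho)$ and $x \in \partial B(x_j,\rho)$,
\[
\IP_z[X_{T(B(x_j,\rho))} = x] = \frac{a(z - x) + O(\rho^{-1})}{\sum_{x' \in \partial B(x_j,\rho)}\big(a(z-x') + O(\rho^{-1})\big)} \cdot \big(1 + o(1)\big),
\]
using that $a(w) = \frac{2}{\pi}\ln|w| + \kappa + O(|w|^{-2})$. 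Since $\ln|z - x| = \ln|z - x_j| + O(\rho/|z-x_j|) = \ln|z-x_j| + O(b)$ uniformly over $x \in \partial B(x_j,\rho)$, the numerator $a(z-x)$ is constant up to an additive error $\frac{2}{\pi}O(b) + O(\kappa\cdot 0)$, which becomes a multiplicative error $1 + O(b/\ln|z-x_j|) + O((\rho \ln \rho)^{-1})$ on $\partial B(x_j,\rho)$; because $|z - x_j| \asymp s_n \to \infty$ and $\rho = b s_n \to \infty$ (recall $s_n = n/\lfloor n^{1-\alpha}\rfloor \to \infty$ since $\alpha < 1$, and $r_n$-type lower bounds hold), both errors tend to $0$, and choosing $b$ small makes them $< \eps'$ for all large $n$. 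I would cite the relevant lemma of~\cite{DPRZ06} (their balls-and-excursions setup already contains precisely such harmonic-measure estimates) rather than reprove the potential-kernel asymptotics from scratch.

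The final bookkeeping step is to combine the two: choose $b$ so small that the single-ball estimate gives a factor $1 + \eps'$ with $\eps'$ chosen so that $(1+\eps')/(1-\eps') \leq 1 + \eps$ (exactly as in the $(1+\frac{v}{3})/(1-\frac{v}{3}) \le 1+v$ trick used in the proof of Lemma~\ref{l_seau}), since the mixture representations for the conditional entrance law and for ${\tilde H}_j$ may use different weighting measures on $\partial A'_j$ and the ratio of two mixtures of quantities all within a factor $1+\eps'$ of a common value is within $(1+\eps')/(1-\eps')$. This yields~\eqref{eq_flat_entrance1} uniformly in $j = 1,\ldots,k_n$ and in $y \in \partial A'$.

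I expect the main obstacle to be the uniformity in $j$ and the control of the error terms' dependence on $n$: the discs $A_j$ have radius $b s_n$ with $s_n$ only polynomially large (a fixed power of $n$), so the $O((\rho\ln\rho)^{-1})$ lattice-correction errors and the $O(b/\ln s_n)$ angular errors must be shown to be genuinely small for all large $n$ after $b$ is fixed first; one must be careful that $b$ is chosen depending only on $\eps$ (not on $n$) and that the remaining $n$-dependent errors vanish afterwards. A secondary subtlety is justifying the "last visit to $\partial A'_j$" decomposition rigorously — that the walk, conditioned to enter $A_j$ first among all the $A_i$, indeed enters $A_j$ from $\partial A'_j$ with the claimed mixture law — which requires the hypothesis $A'_i \cap A'_j = \emptyset$ and $A_j \cap \partial A'_j = \emptyset$ from the Section~\ref{s_SLT} setup, and a clean application of the strong Markov property at the hitting time of $A$ after the last excursion into $A'_j$.
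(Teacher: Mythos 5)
Your overall strategy --- reduce the two-sided bound to near-uniformity of a single-ball harmonic measure and then invoke potential-kernel (equivalently, DPRZ06) asymptotics --- is the right one and is the same as the paper's one-line hint (condition on the hitting position of $B(x_j,R)$ and apply (2.11) of~\cite{DPRZ06}). However, the reduction step as you state it has a genuine gap, which you flag as a ``secondary subtlety'' but do not actually resolve, and as written it is not correct.

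The problem is your mixture representation for the conditional entrance law. You write that ``by the strong Markov property applied at the last visit to $\partial A'_j$ before hitting $A$, the conditional law of $X_{T_n(A)}$ is a mixture over $z\in\partial A'_j$ of $H_{A_j}(z,\cdot)$''. Two things go wrong. First, the last visit to $\partial A'_j$ is not a stopping time, so one cannot apply the strong Markov property there; what one has instead is a last-exit decomposition. Second --- and this is the substantive issue --- the law of the walk after that last visit is not the unconditioned entrance law $H_{A_j}(z,\cdot)=\IP_z[X_{T_n(A_j)}=\cdot]$: having designated that time as the \emph{last} visit to $\partial A'_j$ before $T_n(A)$ forces the walk thereafter to reach $A_j$ without returning to $\partial A'_j$, so the correct kernel is the \emph{conditioned} entrance law $\IP_z[X_{T_n(A_j)}=\cdot\mid T_n(A_j)<T^+_n(\partial A'_j)]$. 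These two measures are different. So the ``mixture of the $H_{A_j}(z,\cdot)$'' claim is false as stated, and your bookkeeping step, which compares two mixtures of the same family of measures, compares mixtures of two different families (unconditioned for ${\tilde H}_j$ via first hitting, conditioned for the conditional entrance law via last exit). The ``clean application of the strong Markov property'' that you defer to does not exist: the conditioning is intrinsic to any last-exit decomposition and has to be faced.

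The fix is exactly what the paper's citation of~\cite{DPRZ06} provides: choose an intermediate radius $R$ with $bs_n<R<s_n/3$, do a last-exit decomposition of \emph{both} $\IP_y[X_{T_n(A)}=x]$ and ${\tilde H}_j(x)$ at $\partial B(x_j,R)$ (with Green's-function weights killed on $A$ and on $A_j$ respectively), and observe that both then appear as mixtures of the \emph{same} family of conditioned kernels $\IP_w[X_{T_n(A_j)}=x,\ T_n(A_j)<T^+_n(\partial B(x_j,R))]$, $w\in\partial B(x_j,R)$. The annulus estimate (2.11) of~\cite{DPRZ06} (or, if you prefer, a Poisson-kernel computation) shows that these conditioned kernels, once normalized, are within a multiplicative factor $1+O(b)$ of one another uniformly in $w$; the ratio of any two mixtures is then controlled by $(1+O(b))/(1-O(b))$, and one chooses $b$ small (depending only on $\eps$). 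Your potential-kernel paragraph proves the right kind of statement but for the \emph{unconditioned} entrance law; you need the annulus (conditioned-to-escape-inward) version, which is what (2.11) gives, and the formula $\IP_z[X_{T(B)}=x]=\bigl(a(z-x)+O(\rho^{-1})\bigr)/\sum_{x'}\bigl(a(z-x')+O(\rho^{-1})\bigr)\cdot(1+o(1))$ as you wrote it is not a standard identity for the ball (it mixes the point-hitting formula with a ball-hitting statement); better to quote the harmonic-measure-from-infinity error bound $\IP_z[X_{T(B(0,\rho))}=x]=\mathrm{hm}_{B(0,\rho)}(x)\bigl(1+O(\rho/|z|)\bigr)$ directly. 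With those corrections your argument goes through and matches the paper's intended proof.
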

\begin{proof}
This fact easily follows e.g.\ from Lemma~2.2 of~\cite{DPRZ06}: one
can use conditioning on the position of the walk upon 
hitting $B(x_j,R)$ for a suitably chosen~$R$, and then use~(2.11)
of \cite{DPRZ06}.
\end{proof}

As in Section~\ref{s_SLT}, 
we denote by~$\zeta_j$ be the number of excursions of~$X$ 
between~$\partial A_j$ and $\partial A'_j$
up to time $\frac{4}{\pi}\gamma n^2\ln^2 n$, and
let $\zeta=\zeta_1+\cdots+\zeta_{k_n}$ be the total number 
of excursions.

 Let~$\gamma'$ be such that $\gamma<\gamma'<\alpha^2$.
Define the event
\[
 \Lambda_1 
= \Big\{\zeta \leq \frac{2\gamma' k_n \ln^2 n}{|\ln (3b)|}\Big\}
\]
(recall that $k_n$ is approximately $n^{2(1-\alpha)}$).
\begin{lem}
\label{l_total_excurs}
There is $c>0$ such that 
\begin{equation}
\label{eq_total_excurs}
 \IP[\Lambda_1] \geq 1-\exp\big(-c k_n \ln^2 n\big).
\end{equation}
\end{lem}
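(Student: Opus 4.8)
The plan is to prove the equivalent estimate $\IP[\zeta>M]\le\exp(-ck_n\ln^2 n)$, where I write $M:=\frac{2\gamma' k_n\ln^2 n}{|\ln(3b)|}$ and $t:=\frac{4}{\pi}\gamma n^2\ln^2 n$. The point is that $\zeta$ is small precisely when it takes a long time to accumulate many excursions: if more than $M$ excursions have started by time $t$, then $S_{M+1}\le t$, hence $D_{\lfloor M\rfloor}<S_{M+1}\le t$, so $\{\zeta>M\}\subseteq\{D_{\lfloor M\rfloor}\le t\}$ and it suffices to bound $\IP[D_{\lfloor M\rfloor}\le t]$. Thus, whereas Lemma~\ref{l_number_exc} bounds $D_j$ from above (and is used that way to get \emph{lower} bounds on excursion counts), here I need exactly the reverse: a lower bound on $D_{\lfloor M\rfloor}$ with the correct leading constant.

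To reduce the multi-box situation to the one-ball framework of Lemma~\ref{l_number_exc}, I would pass to the quotient torus. Since $s_n\lfloor n^{1-\alpha}\rfloor=n$, the centres $x_1,\dots,x_{k_n}$ form (modulo the harmless rounding of $s_n$ to an integer) a single $s_n\Z^2$-orbit, so the canonical projection $\Z_n^2\to\Z_{s_n}^2$ carries $X$ to a simple random walk on the torus of side $s_n$ and carries the whole family $\{\partial A_j\}$, resp.\ $\{\partial A'_j\}$, onto one sphere $\partial B(\bar 0,bs_n)$, resp.\ $\partial B(\bar 0,s_n/3)$ — here one uses $s_n/3<s_n/2$, so that each $B(x_j,s_n/3)$ stays inside its box. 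Consequently the stopping times $D_0,S_1,D_1,\dots$ are unchanged under projection, so $\zeta(t)$ and $D_{\lfloor M\rfloor}$ are distributed as the corresponding quantities for a \emph{single}-ball excursion process on $\Z_{s_n}^2$ with $r=bs_n$, $R=s_n/3$; in particular the $k_n$ ``parallel'' box-excursion processes have been merged into one excursion process of length of order $k_n\ln^2 n$, which is why a single concentration estimate will yield the factor $k_n$ in the exponent. Note $R/r=1/(3b)$, so $\ln(R/r)=|\ln(3b)|$ and $\ln(s_n/r)=\ln(1/b)$.

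Next I would invoke the lower-deviation counterpart of Lemma~\ref{l_number_exc}, i.e.\ the other half of Lemma~3.2 of~\cite{DPRZ06} (one may instead apply its multi-box form directly, skipping the projection): for $\delta\le\delta_0$ with $\delta\ge 6c_1(\frac{1}{bs_n}+3b)$ one has, with probability at least $1-\exp\bigl(-c\delta^2\tfrac{|\ln(3b)|}{\ln(1/b)}\lfloor M\rfloor\bigr)$, that $D_{\lfloor M\rfloor}\ge(1-\delta)\frac{2s_n^2|\ln(3b)|}{\pi}\lfloor M\rfloor$, which equals $(1-\delta)\frac{4\gamma'}{\pi}n^2\ln^2 n$ up to lower-order terms, using $s_n^2 k_n=n^2$. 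Fix $\delta:=\min\bigl(\delta_0,\tfrac12(1-\gamma/\gamma')\bigr)>0$ (legitimate since $\gamma<\gamma'$); then for every sufficiently small $b$ — which is compatible with the other constraints imposed on $b$ in the proof — and all large $n$ the requirement $\delta\ge 6c_1(\frac{1}{bs_n}+3b)$ holds, the first term being negligible because $s_n\to\infty$. Since $(1-\delta)\gamma'>\gamma$, the lower bound above strictly exceeds $t$, so the event in question lies in $\{D_{\lfloor M\rfloor}>t\}\subseteq\Lambda_1$; and since $\lfloor M\rfloor\asymp k_n\ln^2 n$, its complement has probability at most $\exp(-c''k_n\ln^2 n)$ for a suitable $c''>0$. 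This is~\eqref{eq_total_excurs}.

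The substantive ingredient, and the main obstacle, is this lower bound on $D_{\lfloor M\rfloor}$ with the \emph{sharp} constant $\frac{2s_n^2|\ln(3b)|}{\pi}$ per excursion: a cruder bound of the form $D_{\lfloor M\rfloor}\gtrsim \eps s_n^2\lfloor M\rfloor$ would not do, since $\eps$ is a constant while $|\ln(3b)|$ must be taken large, so it is exactly the logarithmic gain in the excursion-time estimate that lets the slack $\gamma'>\gamma$ close the argument. This estimate is the exact counterpart of Lemma~\ref{l_number_exc} and is proved by the same two-dimensional Green's-function computations. The only other point requiring care is that $s_n=n/\lfloor n^{1-\alpha}\rfloor$ need not be an integer, so the projection in the second paragraph is only an approximate graph map; this is repaired in the standard way (the boxes differ from a genuine $s_n\Z^2$-lattice by $O(1)$, which perturbs the spheres $\partial A_j,\partial A'_j$ by $O(1)$ sites and $D_{\lfloor M\rfloor}$ only by a negligible amount), or avoided altogether by using Lemma~3.2 of~\cite{DPRZ06} in its original multi-box form.
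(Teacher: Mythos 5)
Your reduction of $\{\zeta>M\}$ to $\{D_{\lfloor M\rfloor}\le t\}$, and the observation that the \emph{sharp} constant $\frac{2}{\pi}s_n^2|\ln(3b)|$ per excursion cycle is indispensable, are both correct and match the paper's intent. But the implementation you propose --- projecting to a quotient torus $\Z^2_{s_n}$ so as to replace the multi-box excursion process by a single-ball one --- is precisely the route the paper opens its proof by explicitly \emph{rejecting}: since $s_n=n/\lfloor n^{1-\alpha}\rfloor$ need not be an integer, the claimed equality in law is ``formally false.'' You acknowledge this and propose a ``standard $O(1)$-perturbation'' repair, but that is not spelled out and is not obviously workable with the uniformity the argument requires: the rounding errors differ from box to box, so the $k_n\sim n^{2(1-\alpha)}$ perturbed configurations are not translates of a single one, and the $1+o(1)$ precision in the excursion-time constant must hold simultaneously over all of them. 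Your other suggestion --- applying ``Lemma~3.2 of~\cite{DPRZ06} in its original multi-box form'' --- appears to be a misreading: the paper cites that lemma as a one-ball statement (it is the source of Lemma~\ref{l_number_exc}), and what the present proof actually borrows from it is only the concentration relation (3.19) inside its proof, re-run by hand for the multi-box configuration.

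The paper's own route sidesteps the quotient identification entirely. Working directly on $\Z^2_n$ with the actual balls $A_j$, it first uses a CLT estimate $\IP_x[X_{s_n^2}\in A]\ge c_1$ to obtain the uniform exponential moment bound $\IE_x\exp(T_n(A)/s_n^2)\le c_2$; it then pins down the sharp constant in $\IE_x T_n(A)$ by approximating the walk by Brownian motion via the multidimensional KMT theorem (Einmahl's version) together with Lemma~2.1 of~\cite{DPRZ04}; and with these two ingredients it runs the concentration argument of (3.19) in the proof of Lemma~3.2 of~\cite{DPRZ06}. So the end game is exactly the lower-tail estimate on $D_{\lfloor M\rfloor}$ you describe, but the crucial hitting-time input is established directly on the original torus rather than transferred from a quotient one. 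To complete your version you would need either to carry out the $O(1)$-perturbation coupling in detail with uniformity over all $k_n$ boxes, or to switch to the paper's direct derivation of the $\IE_x T_n(A)$ bound.
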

\begin{proof}
 It is tempting to write that the total number of excursions 
should have the same law as the number of excursions between~$B(0,bs_n )$
and $B(0,s_n /3)$ in~$\Z^2_{s_n }$
(if so, an application of Lemma~\ref{l_number_exc} would do the job). 
In the continuous setting this would 
work well, but, unfortunately, $s_n $ is not necessarily integer
which makes the above-mentioned equality in law formally false.

So, we proceed in the following way. First, by CLT one can obtain that 
there exists $c_1=c_1(b)>0$ such that $\IP_x[X_{s_n ^2}\in A]\geq c_1$
for all $x\in\Z^2_n$. This implies that 
\begin{equation}
\label{bounded_Eexp}
 \IE_x \exp\Big(\frac{T_n(A)}{s_n ^2}\Big) \leq c_2.
\end{equation}
Then, to find an upper bound on $\max_x \IE_xT_n(A)$, we can first approximate
the random walk with the Brownian motion by means of the multidimensional
version (Theorem~1 of~\cite{E89}) of the KMT strong approximation
theorem~\cite{KMT}, and then use Lemma~2.1 from~\cite{DPRZ04} together
with~\eqref{bounded_Eexp} to obtain the following fact:
for any~$\delta \in (0,\gamma'-\gamma)$ one can choose small enough~$b$
in such a way that
\begin{equation}
\label{bounded_E}
 \max_x \IE_xT_n(A) \leq \frac{2}{\pi} (\gamma+\delta) s_n ^2 |\ln(3b)|.
\end{equation}
The rest of the proof goes exactly in the same way as the proof of Lemma~3.2
(the relation (3.19) there) in~\cite{DPRZ06}.
\end{proof}

Next, fix~$\gamma''$ in such a way that $\gamma'<\gamma''<\alpha^2$.
If we had at least $\frac{\gamma'}{\gamma''}k_n$ 
balls among~$(A_1,\ldots,A_{k_n})$ with the 
corresponding number of excursions more than
$\frac{2\gamma''\ln^2 n}{|\ln (3b)|}$ in each of them, 
then the total number of excursions~$\zeta$ 
would be strictly greater than 
$\frac{2\gamma' k_n \ln^2 n}{|\ln (3b)|}$, 
so the event~$\Lambda_1$ would not occur. Thus, 
\begin{equation}
\label{many_small_exc}
 \text{on~$\Lambda_1$ we have that }\sum_{j=1}^{k_n} 
 \1{\zeta_j\leq \textstyle\frac{2\gamma''\ln^2 n}{|\ln (3b)|}} 
   \geq \Big(1-\frac{\gamma'}{\gamma''}\Big)k_n,
\end{equation}
i.e., on the event~$\Lambda_1$ the number of places where we have not too
many excursions is of order~$k_n$. 

Now, choose~$v>0$ in such a way that $(1+2v)\gamma'\alpha^{-2}<1$,
and assume that~$b$ is sufficiently small so that the hypothesis 
of Lemma~\ref{l_seau} holds on~$\Z^2_{s_n }$ for $r=bs_n $,
$R=s_n /3$ (Lemma~\ref{l_flat_entrance} assures that we can choose
such~$b$). 
Denote
\[
 \ell_1 := \frac{\gamma''\alpha^{-2}\ln^2 s_n }{|\ln (3b)|}, \qquad
\ell_2 := \frac{(1+3v)\gamma''\alpha^{-2}\ln^2 s_n }{|\ln (3b)|},
\]
and let ${\tilde Z}^{(j)}_1,{\tilde Z}^{(j)}_2,{\tilde Z}^{(j)}_3,\ldots$
be the independent excursions between~$A_j$ and~$A'_j$ obtained
using the coupling of Section~\ref{s_SLT}. 
Define the events $\Lambda_2^{(j)}=U_j^{\ell_1}$,
where~$U_j^{\ell_1}$ is the event in~\eqref{df_seau_bon},
and
\begin{align*}
 \Lambda_3^{(j)} &= \big\{\text{there exists }y\in A_j
 \text{ such that }y\notin {\tilde Z}^{(j)}_m
  \text{ for all }m\leq \ell_2\big\}.
\end{align*}

Observe that, by Lemmas~\ref{l_seau} and~\ref{l_couple_cover}, 
we have
\begin{equation}
\label{G2G3}
 \IP[\Lambda_2^{(j)}\cap \Lambda_3^{(j)}] \to 1 \qquad \text{ as }n\to \infty,
\end{equation}
for any~$j=1,\ldots,k_n$.

Next, choose ${\tilde \gamma} \in \big(\frac{\gamma'}{\gamma''},1\big)$, 
and define the event 
\begin{equation}
\label{df_Lambda4}
 \Lambda_4 = \Big\{\sum_{j=1}^{k_n}
  \1{\Lambda_2^{(j)}\cap \Lambda_3^{(j)}}
        \geq {\tilde\gamma}k_n+1\Big\};
\end{equation}
observe that the indicators in the above sum are i.i.d.\ random variables.
By~\eqref{G2G3}, for all large enough~$n$ it holds that (recall that 
$k_n=n^{2(1-\alpha)}(1+o(1))$)
\begin{equation}
\label{est_Lambda4}
 \IP[\Lambda_4] \geq 1-\exp(-c n^{2(1-\alpha)})
\end{equation}
But, taking~\eqref{many_small_exc} into account, we see that 
on $\Lambda_1\cap \Lambda_4$
at time $\frac{4}{\pi}\gamma n^2 \ln^2n$ we have
at least $\big({\tilde\gamma}-\frac{\gamma'}{\gamma''}\big)k_n$
balls among~$A_1,\ldots,A_{k_n}$ which are
not completely covered (observe that we have to exclude at most 
one ball that may have been crossed by the initial excursion
$(X_0,\ldots,X_{D_0})$; this is why we put ``$+1$'' in~\eqref{df_Lambda4}). 
This means that $\T_n>\frac{4}{\pi}\gamma n^2 \ln^2n$
on $\Lambda_1\cap \Lambda_4$, so the second
inequality in~\eqref{eq_t_lower} follows from~\eqref{est_Lambda4}
and Lemma~\ref{l_total_excurs}. \qed

\subsection{Lower bound}
\label{s_lowbound}
In this section, we prove the lower bound of~(\ref{eq_t_lower}). 
For this, we propose a simple strategy for the random walk to cover~$\Z_n^2$ 
before time $\frac{4}{\pi}\gamma n^2\ln^2 n$. We start with an informal discussion to outline the main ideas. 
We first divide the torus $\Z_n^2$ into $n^{2(1-\alpha)}$ 
boxes $B_1,\ldots,B_{n^{2(1-\alpha)}}$ of size $n^\alpha$ 
with $\alpha<\sqrt{\gamma}$. Since we want the 
random walk to cover the torus 
$\Z_n^2$ before time $t_{0}=\frac{4}{\pi}\gamma n^2\ln^2 n$, 
the natural strategy is to attempt to cover 
each box in time at most 
\[
r_n := \frac{t_{0}}{n^{2(1-\alpha)}}
=\frac{4\gamma}{\pi\alpha^2}n^{2\alpha}(\ln n^{\alpha})^2.
\] 
For this, we divide the time interval $[0, t_{0}]$ into time intervals 
$[(j-1)r_n , jr_n )$, for $j\in \{1,\dots, n^{2(1-\alpha)}\}$, 
and during each of them we force the random walk to spend most of the 
time 
in the box~$B_j$. In order 
to do this, we control the size of 
excursions of the random walk outside~$B_j$ 
and show that with probability greater 
than $\exp(-c\ln^{10}n)$ the time spent 
by the random walk in~$B_j$ is almost~$r_n $. Then, we show
 that the trace left by the random walk on~$B_j$ is not very different 
from the trace left on~$B_j$ by a
 random walk in a torus a bit larger than~$B_j$, with a not-too-small
probability (we invite the reader to look at Figure~\ref{f:cover_square}
to get an idea about how this is done). 
Since $\alpha<\sqrt{\gamma}$, this allows us to 
apply~\eqref{conv_Tn}
to conclude that, conditionally on the events mentioned above, 
with probability greater than a constant~$c'>0$ 
the random walk covers the box~$B_j$ 
during the time interval $[(j-1)r_n , jr_n )$. 
Finally, choosing $\alpha$ close 
enough to $\sqrt{\gamma}$ and applying the 
Markov property, we obtain the total cost
for this strategy that is at least $(c'\exp(-c\ln^{10}n))^{n^{2(1-\alpha)}}\geq 
\exp(-n^{2(1-\sqrt{\gamma})+\eps})$ for $\eps>0$.
\medskip

Now, let us start the proof. Let $\alpha\in (0,\sqrt{\gamma})$ and 
$N= \big\lceil \frac{n}{\lfloor n^{\alpha}\rfloor}\big\rceil$. 
We divide the torus $\Z_n^2$ into~$N^2$ 
boxes of size $\lfloor n^{\alpha}\rfloor$ 
(i.e., each box contains~$\lfloor n^{\alpha}\rfloor^2$ sites).  
The ``lower left" box is called $B_1$ (in this section the torus $\Z_n^2$ is identified with $[0,n)^2\subset \Z^2$) and the other 
boxes are positioned and enumerated following the arrows showed in Figure~\ref{f:enum_boxes} 
up to the box $B_{N^2}$. Observe that if~$n$ 
is not divisible by $\lfloor n^{\alpha}\rfloor$,
then the boxes $B_{jN}$, $B_{(j-1)N+1}$ on 
Figure~\ref{f:enum_boxes} have some area in common for
$j\in \{1,\dots,N\}$. The same is true for the boxes $B_j$, $B_{N^2-(j-1)}$ 
for $j\in \{1,\dots,N\}$.

\begin{figure}
\begin{center}
\includegraphics{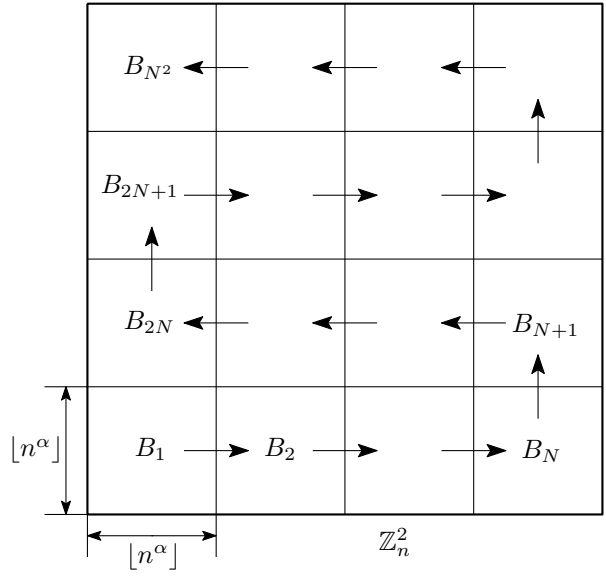}
\caption{Enumeration of the boxes $B_i$, $i\in \{1,\dots, N^2\}$.}
\label{f:enum_boxes}
\end{center}
\end{figure}

Let $\eta \in \big(0, 
\min\{1,\frac{1}{2}(\frac{\sqrt{\gamma}}{\alpha}-1)\}\big)$ 
and for all $i\in \{1,\dots, N^2\}$,  introduce the following sets
\[
B'_i=\{x\in \Z^2_{n}:\text{ there exists }y\in B_i
  \text{ such that }\|y-x\|_\infty\leq  \lfloor \eta n^{\alpha}\rfloor\}.
\]

Now, consider the torus $\Z^2_{\ell_n}$ where $\ell_n=2\lfloor \eta 
n^{\alpha}\rfloor +\lfloor n^{\alpha}\rfloor$ 
and fix a box~$B$ of size $\lfloor n^{\alpha}\rfloor$ ``centered" in it.
Let 
\[
{\tilde B} =\{x\in\Z^2_{\ell_n} : \text{ for all }y\in B,
  \|y-x\|_\infty\geq \lfloor \eta n^{\alpha}\rfloor\}
\]
be the ``boundary'' of the torus~$\Z^2_{\ell_n}$.
For all $i\in \N$, we consider the sequence 
$Y^{(i)}$ (independent of $X$) of i.i.d.\ 
random elements, where for each $i\geq 1$, 
\[
Y^{(i)}=\Big\{ Y^{(i)}_{j,x}, x\in {\tilde B}, j\geq 1 \Big\},
\]
and the $Y^{(i)}_{j,x}$ are independent random variables 
such that 
\[
\IP[Y^{(i)}_{j,x}=y]=H_B(x,y),
\]
where $H_B(x,\cdot)$ is the entrance law in~$B$ for the simple random walk 
on the torus $\Z^2_{\ell_n}$ starting from~$x$, 
similarly to~\eqref{df_H_A}.
Using the natural identification of the boxes $B'_i$ with $\Z^2_{\ell_n}$ and 
the boxes $B_i$ with $B$, 
each random element $Y^{(i)}$ will be viewed as a set of 
random variables indexed by $\partial B'_i$ and $j\geq 1$ and taking values in $B_i$.

Set $V_0=0$. For $i\in \{1,\dots, N^2\}$, 
we define inductively (see Figure~\ref{f:cover_square}):
\begin{align*}
\sigma_0^{(i)}&=V_{i-1},
\nonumber\\
\tau_0^{(i)}&=\inf \big\{ t\geq \sigma_0^{(i)}: X_t \in \partial B'_i\big\}
\end{align*}
(observe that for $i=1$ the value of~$V_{i-1}=V_0$ is set to be equal to~$0$,
and, for the next steps, see~(\ref{ITYE}) below)
and for all $j\geq 1$, define
\begin{align*}
\sigma_j^{(i)}&=\inf \Big\{ t\geq \tau_{j-1}^{(i)}: 
X_t=Y^{(i)}_{j,X_{\tau^{(i)}_{j-1}}}\Big\},\nonumber\\
\tau_j^{(i)}&=\inf \big\{ t\geq \sigma^{(i)}_j: X_t \in \partial B'_i\big\}.
\end{align*}
Let $\delta>0$ and recall that $r_n =\frac{4}{\pi}\gamma n^{2\alpha}\ln^2 n$. 
We also define 
\begin{equation*}
J_i=\inf\Big\{j\geq 0: \sum_{k=0}^j(\tau_k^{(i)}-\sigma_k^{(i)})
\geq \lfloor(1-\delta)r_n \rfloor\Big\}
\end{equation*}
and
\begin{align*}
\beta_i=\sigma_{J_i}^{(i)}+\lfloor(1-\delta)r_n \rfloor
- \sum_{k=0}^{J_i-1}(\tau_k^{(i)}-\sigma_k^{(i)}).
\end{align*}
Finally, we define 
\begin{equation}
\label{ITYE}
V_i=\inf\big\{t\geq \beta_i: X_t= w_i\big\}
\end{equation}
where $w_i$ is the lower left corner point of the box $B_{i+1}$.

\begin{figure}
\centering \includegraphics{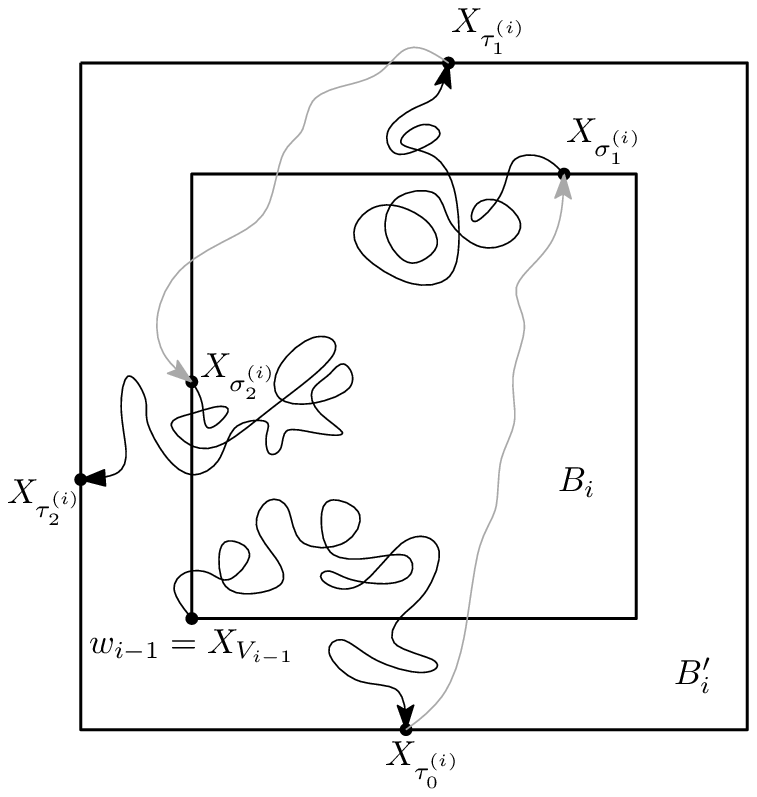}
  \caption{The strategy for covering the box $B_i$. We let the 
walk evolve freely until it hits the boundary of~$B'_i$. Then, 
we \emph{force} the walk to go rapidly to a random site
of $\partial B_i$ (this corresponds to the gray
parts of the trajectory). This random site is chosen according
to the entrance law to~$B_i$ \emph{as if} we had the torus~$\Z^2_{\ell_n}$
instead of the box~$B'_i$. This allows us to dominate the trace
of the random walk~${\hat X}$ on~$B\subset\Z^2_{\ell_n}$ by the trace
of the random walk~$X$ on~$B_i$.}
  \label{f:cover_square}
\end{figure}

By transitivity of the simple random walk on the torus $\Z_n^2$ we have that 
\begin{align}
\label{RTY3}
\IP\Big[\T_n\leq \frac{4}{\pi}\gamma n^2\ln^2 n\Big]=\IP_x\Big[\T_n\leq 
\frac{4}{\pi}\gamma n^2\ln^2 n\Big]
\end{align}
for all $x\in \Z^2_n$. So, in the rest of the proof we assume that $x=0$.

\medskip

Define $\S^{(i)}$ as the trace left by the excursions of the random 
walk~$X$ during the time intervals $[\sigma^{(i)}_j, 
\tau^{(i)}_j]$, $0\leq j< J_i$ and $[\sigma_{J_i}^{(i)},\beta_i]$. 
Define the events $M_i$, for $i\in \{1,\dots, N^2\}$ as
\begin{align*}
M_i
&=\Big\{J_i\leq \ln^6 n, B_i \subset \S^{(i)}\Big\}\cap 
\Big\{ \sigma^{(i)}_{j+1}
-\tau^{(i)}_{j}\leq \delta \frac{4}{\pi}\gamma 
\frac{n^{2\alpha}}{\ln^4 n}, 0\leq j< J_i\Big\}\nonumber\\
&\phantom{**}\cap 
\Big\{ V_i-\beta_i\leq \delta \frac{4}{\pi}\gamma 
\frac{n^{2\alpha}}{\ln^4 n}\Big\}.
\end{align*}
Observe that $\bigcap_{i=1}^{N^2}M_i$ is a desired strategy:
\begin{align}
\label{RTY2}
\Big\{\bigcap_{i=1}^{N^2}M_i\Big\} \subset \Big\{\T_n\leq \frac{4}{\pi}\gamma n^2\ln^2 n\Big\} .
\end{align}

For $i\in \{1,\dots,N^2\}$ we introduce the 
$\sigma$-fields $\G_{V_i}=\F_{V_i}\vee \sigma(Y^{(j)}, j\leq i)$, 
where $\F_{V_i}$ is the $\sigma$-field 
generated the random walk~$X$ until time~$V_i$. 
Conditioning iteratively by $\G_{V_i}$ for $i\in \{1,\dots,N^2\}$ and 
using the strong Markov property of $X$ (observe that $X$ still has the 
strong Markov property when conditioning by $\G_{V_i}$ since the random 
elements $Y^{(i)}$ are independent of~$X$), we obtain that
\begin{equation*}
\IP_0\Big[\bigcap_{i=1}^{N^2}M_i\Big]=\big(\IP_0[M_1]\big)^{N^2}.
\end{equation*} 
We will now estimate $\IP_{0}[M_1]$. For this, 
we introduce the $\sigma$-field $\HH$ 
generated by the random element $Y^{(1)}$ and by~$X$ within the time intervals $([\sigma^{(1)}_j, \tau^{(1)}_j], 0\leq j<J_1)$ and $[\sigma_{J_1}^{(1)}, \beta_1]$. 
Define also the events
\[
\Phi^{(1)}_j = \Big\{\sigma^{(1)}_{j+1}-\tau^{(1)}_{j}\leq \delta 
\frac{4}{\pi}\gamma \frac{n^{2\alpha}}{\ln^4 n} \Big\} 
\]
for $0\leq j<J_1$ and
\[
\Phi^{(1)}_{J_1} = \Big\{ V_1-\beta_1\leq \delta \frac{4}{\pi}\gamma 
\frac{n^{2\alpha}}{\ln^4 n}\Big\}.
\]
By definition of $M_1$ we have 
\begin{align}
\label{ERT}
\IP_{0}[M_1]
&=\IP_{0}\Big[J_1\leq \ln^6n, B_1\subset \S^{(1)}, 
\bigcap_{j=0}^{J_1} \Phi^{(1)}_j
\Big]\nonumber\\
&=\IE_{0}\Big[\1{J_1\leq \ln^6n, B_1\subset \S^{(1)}}\IP_{0}
\Big[\bigcap_{j=0}^{J_1}\Phi^{(1)}_j
\mid \HH\Big] \Big].
\end{align}
Now observe that, conditioned on~$\HH$, 
the events $\Phi^{(1)}_j$,
$0\leq j\leq J_1$, are independent. 
Further, in the time interval $[\tau^{(1)}_{j},\sigma^{(1)}_{j+1}]$, for $0\leq j<J_1$,
we have an excursion of~$X$ starting 
at point $X_{\tau^{(1)}_j}$ on~$\partial B'_1$ 
and ending at point $Y^{(1)}_{j,X_{\tau^{(1)}_j}}$ on $\partial B_1$. The last excursion in the time interval $[\beta_1, V_1]$ is conditioned to start from some point in $B'_1$ and to end at the lower left corner of $B_2$. Considering the process $S=(S_t)_{t\geq 0}$ 
which under the measure $P_x$ is a random walk on $\Z^2$ starting at $x$, we deduce that
\begin{align*}
\IP_{0}\Big[\bigcap_{j=0}^{J_1}\Phi^{(1)}_j
\mid \HH\Big]
&\geq \Big(\inf_{\substack{x\in  B'_1,\\ y\in \partial B_1}}
\IP_x[X_v=y]\Big)^{J_1}\nonumber\\
& \geq \Big(\inf_{\substack{x\in  B'_1,\\ y\in \partial B_1}}
P_x[S_v=y]\Big)^{J_1}
\end{align*}
with $v=\big\lfloor \delta \frac{4}{\pi}\gamma 
\frac{n^{2\alpha}}{\ln^4 n}\big\rfloor$ if 
$\big\lfloor \delta \frac{4}{\pi}\gamma 
\frac{n^{2\alpha}}{\ln^4 n}\big\rfloor$ and $\|x-y\|_1$ 
have the same parity (where $\|\cdot\|_1$ is the 1-norm on $\Z^2_n$) and 
$v=\lfloor \delta \frac{4}{\pi}\gamma \frac{n^{2\alpha}}{\ln^4 n}\rfloor-1$ otherwise.
Using the local central limit theorem 
(see e.g.\ Theorem~2.1.3 in~\cite{LL10}) 
and the fact that $\|x-y\|_1 \leq 4n^{\alpha}$ (recall that $\eta<1$), 
we obtain 
\begin{align*}
 \Big(\inf_{\substack{x\in B'_1,\\ y\in \partial B_1}}
P_x\Big[S_v=y\Big]\Big)^{J_1}
 \geq \exp\Big(-\frac{c_0 J_1\ln^4 n}{\delta \gamma}  \Big).
\end{align*}
for some constant $c_0>0$ and~$n$ large enough.
From (\ref{ERT}), we deduce
\begin{align}
\label{RTY}
\IP_{0}[M_1]\geq \exp\Big(-\frac{c_0\ln^{10} n}{\delta \gamma}  
\Big)\times \IP_{0}[J_1\leq \ln^6n, B_1\subset \S^{(1)}]
\end{align}
for~$n$ large enough.
Let us now bound from below the probability in the right-hand side of~(\ref{RTY}). 
We start by writing
\begin{align}
\label{Eqlb2}
\IP_{0}[J_1\leq \ln^6n, B_1\subset \S^{(1)}]
& \geq \IP_{0}[J_1\leq \ln^6n]-\IP_{0}[B_1\not\subset \S^{(1)}].
\end{align}
Now, let $\Q_x$ be the law of a simple 
random walk $\hat{X}$ on $\Z^2_{\ell_n}$ 
starting at~$x$ and define the random variables 
$\hat{\sigma}_j$, $\hat{\tau}_j$, $\hat{\beta}$, $\hat{J}$ and $\hat{\S}$ for $\hat{X}$ 
analogously to $\sigma^{(1)}_j$, $\tau^{(1)}_j$, $\beta_1$
$J_1$ and $\S^{(1)}$ for~$X$ ($B$ and~${\tilde B}$ play the role of~$B_1$
and $\partial B'_1$, correspondingly). 
Observe that by construction, the excursions of~$X$ during 
the time intervals $[\sigma^{(1)}_j, \tau^{(1)}_j]$ 
until time $\beta_1$ 
have the same law under $\IP_0$ as the excursions of $\hat{X}$ during the 
time intervals $[\hat{\sigma}_j, \hat{\tau}_j]$ 
until time $\hat{\beta}$ 
under $\Q_{x_0}$ where $x_0:=(\lfloor \eta n^{\alpha}\rfloor,\lfloor \eta n^{\alpha}\rfloor)$. 
Therefore, we have
\begin{align}
\label{WER}
\IP_{0}[J_1\leq \ln^6n, B_1\subset \S^{(1)}]
&\geq \Q_{x_0}[\hat{J}\leq \ln^6n]-\Q_{x_0}[B \not\subset \hat{\S}]\nonumber\\
& \geq \Q_{x_0}[\hat{J}\leq \ln^6n]-\Q_{x_0}[\T_{\ell_n}>(1-\delta)r_n ].
\end{align}
Using the fact that $\eta< \frac{1}{2}(\frac{\sqrt{\gamma}}{\alpha}-1)$ 
we can choose $\delta>0$ such that $\delta<1-\frac{\alpha^2(1+2\eta)^2}{\gamma}$, 
then by~\eqref{conv_Tn}
we obtain
\begin{equation}
\label{Eqlb3}
\Q_{x_0}[\T_{\ell_n}>(1-\delta)r_n ]\leq \frac{1}{4}
\end{equation}
for all~$n$ large enough.

Now let us show that $\Q_{x_0}[\hat{J}\leq \ln^6 n]\geq \frac{1}{2}$ 
for all large enough~$n$.
We first introduce the following event 
\[
\Lambda=\Big\{\text{there exists $j\in \{0,\dots, \hat{J}-1\}$ such that 
$\hat{\tau}_j-\hat{\sigma}_j\leq \frac{r_n }{\ln^6 n}$} \Big\}.
\]
Since $\hat{J}\leq \frac{r_n }{\lfloor \eta n^{\alpha}\rfloor}$ 
(indeed, as any excursion starts from $\partial B$ 
and ends at $\tilde{B}$, 
we need at least $\lfloor \eta n^{\alpha}\rfloor$ steps to complete it), 
we obtain by the Markov property
\begin{align}
\label{Eqlb}
\Q_{x_0}[\hat{J}> \ln^6 n]
&\leq \Q_{x_0}[\Lambda]\nonumber\\
&\leq \sum_{j=0}^{\lfloor r_n \lfloor \eta n^{\alpha}\rfloor^{-1}\rfloor-1}
\Q_{x_0}\Big[\hat{\tau}_j-\hat{\sigma}_j
\leq \frac{r_n }{\ln^6 n}\Big]\nonumber\\
&\leq \frac{r_n }{\lfloor \eta n^{\alpha}\rfloor}\sup_{x\in \partial B}
\Q_x\Big[\max_{t\leq r_n \ln^{-6 }n}\|\hat{X}_t\|_1\geq \eta n^\alpha\Big]\nonumber\\
& = \frac{r_n }{\lfloor \eta n^{\alpha}\rfloor}\sup_{x\in \partial B}
P_x\Big[\max_{t\leq r_n \ln^{-6 }n}\|S_t\|_1\geq \eta n^\alpha\Big].
\end{align}

Using item b) of Proposition~2.1.2 in~\cite{LL10}, 
we obtain that there exist positive constants~$c_1$ and~$c_2$ such that 
\begin{equation*}
\sup_{x\in \partial B}P_x\Big[\max_{t\leq r_n \ln^{-6 }n}\|S(t)\|_1
\geq\eta n^\alpha\Big]\leq c_1\exp(-c_2\eta^2\gamma^{-1}\ln^4 n).
\end{equation*}
Together with (\ref{Eqlb}) this implies that $\Q_{x_0}[\hat{J}> \ln^6 n]\to 0$ 
as $n\to \infty$ and therefore $\Q_{x_0}[\hat{J}\leq \ln^6 n]\geq \frac{1}{2}$ 
for~$n$ large enough.
Combining this fact with~(\ref{Eqlb2}), (\ref{WER}), 
and~(\ref{Eqlb3}) we obtain that 
\begin{equation}
\IP_{0}[J_1\leq \ln^6n, B_1\subset \S^{(1)}]\geq \frac{1}{4}
\end{equation}
for all $n$ large enough.
Finally, using (\ref{RTY}), (\ref{RTY2}) and (\ref{RTY3}) we deduce that
\begin{align*}
\IP\Big[\T_n\leq \frac{4}{\pi}\gamma n^2\ln^2 n\Big]
&\geq \Big( \frac{1}{4} \exp\Big(-\frac{c_0\ln^{10} n}{\delta \gamma}  
\Big)  \Big)^{N^2}\nonumber\\
&\geq  \exp\Big(-2n^{2(1-\alpha)}\Big(\frac{c_0}{\delta \gamma} 
\ln^{10}n+2\ln 2\Big)  \Big)
\end{align*}
for $n$ large enough.
Since $\alpha\in (0,\sqrt{\gamma})$ can be chosen arbitrarily 
close to $\sqrt{\gamma}$ we obtain the lower bound in Theorem~\ref{t_lower}.
\qed

\section*{Acknowledgments}
The authors thank the French-Brazilian 
program \textit{Chaires Fran\c{c}aises dans l'\'Etat
de S\~ao Paulo} which supported the visit of F.C.\ to Brazil.
S.P.\ and M.V. were partially supported by
CNPq (grants 300886/2008--0 and 301455/2009--0). 
The last three authors thank FAPESP (2009/52379--8)  
for financial support. F.C.~is partially supported by CNRS, UMR 7599 LPMA.

\end{document}